\newtheorem{thm}{Theorem}
\newtheorem{theorem}{Theorem}[section]
\newtheorem{corollary}[theorem]{Corollary}
\newtheorem{lemma}[theorem]{Lemma}
\newtheorem{prop}[theorem]{Proposition}
\theoremstyle{remark}
\newtheorem{rem}[theorem]{\bf Remark}
\theoremstyle{definition}
\newtheorem{definition}[theorem]{Definition}
\theoremstyle{definition}
\newtheorem*{ack}{Acknowledgments}
\theoremstyle{remark}
\newcommand{\dbar}{\bar\partial}
\newcommand{\im}{{\rm im\,}}
\newcommand{\dint}{d_\square} 
\newcommand{\dom}{{\rm dom\,}}
\newcommand{\supp}{\, {\rm supp}\,}
\newcommand{\bid}{\, {\bf 1}}
\newcommand{\dpq}{\mathcal D^{p,q}}
\newcommand{\lpq}{\Lambda^{p,q}}
\newcommand{\cbpq}{C^\infty(\bar M,\lpq)}
\newcommand{\ccpq}{C^\infty_c(\bar M,\lpq)}
\newcommand{\lzpq}{L^2(M,\Lambda^{p,q})}
\newcommand{\CC}{\mathbb{C}}
\newcommand{\bC}{\mathbb{C}}
\newcommand{\Hmm}[1]{\leavevmode{\marginpar{\tiny%
$\hbox to 0mm{\hspace*{-0.5mm}$\leftarrow$\hss}%
\vcenter{\vrule depth 0.1mm height 0.1mm width \the\marginparwidth}%
\hbox to 0mm{\hss$\rightarrow$\hspace*{-0.5mm}}$\\\relax\raggedright #1}}}
\begin{document}
\title[Heat estimates on $G$-manifolds]{Heat kernel estimates for the
$\dbar$-Neumann problem on $G$-manifolds}

\author{Joe J Perez}
\address{Fakult\"at f\"ur Mathematik\\
Universit\"at Wien\\
Vienna, Austria}
\email{joe\_j\_perez@yahoo.com}
\thanks{JJP is supported by FWF grant P19667, {\it Mapping Problems in Several
Complex Variables}}

\author{Peter Stollmann}
\address{Fakult\"at f\"ur Mathematik\\
Technische Universit\"at\\
Chemnitz, Germany}
\email{peter.stollmann@mathematik.tu-chemnitz.de}
\thanks{PS is partly supported by DFG}
\subjclass[2000]{Primary 32W30; 32W05; 35H20}

\begin{abstract}
We prove heat kernel estimates for the $\dbar$-Neumann Laplacian $\square$
acting in spaces of differential forms over noncompact manifolds with a Lie group symmetry and compact quotient. We also relate our results to those for an associated Laplace-Beltrami operator on functions. \end{abstract}
\date{\today}

\maketitle

\section{Introduction}

We are concerned with bounds on the heat kernel of the $\dbar$-Neumann Laplacian
on manifolds with boundary possessing a Lie group symmetry. Heat kernel bounds are an object of intensive study
and an attempt to describe only the most important works would go well beyond
the scope of the present article. Instead we refer to \cite{Grigoryan}  and
point out the pecularities of the model
we are dealing with before properly introducing the setup. The operator we deal
with is the natural Laplacian coming from the PDE of several complex variables.
It acts on complex-valued differential forms on a manifold with boundary and has non-coercive boundary
conditions. Despite these differences from the usual situation, some techniques from the theory of Dirichlet forms remain applicable to
obtain the bounds which frequently are the goal in studies of heat estimates in very different settings. Due to the complications in our model, it comes as a nice surprise that these usual tools, {\it e.g.}\ the intrinsic
metric, come in so handy. Apart from the results that will soon be mentioned,
this surprise is certainly a message we want to pass along. Since we would like to
communicate our results to people in at least two communities, we will take our
time to explain certain basics that might be obvious to some readers. We ask those to
please bear with us.

Let $M$ be a complex manifold, $n=\dim_{\mathbb C} M$, and assume that $M$ has a
smooth boundary $bM$ such that $\bar M = M\cup bM$. Assume further that $\bar M$
is contained in a slightly larger complex manifold $\widetilde{M}$ of the same
dimension. The space of holomorphic functions on $M$ under various complex-geometric conditions on
$bM\subset\widetilde{M}$ has been investigated from various standpoints, beginning with Hartogs and
Levi \cite{Ha, L1,L2} and, with Stein theory and sheaf-theoretic methods,
culminating in the Oka-Grauert theorem, \cite{Gr}.

An approach to problems in several complex variables using partial
differential equations was also developed by Morrey, Spencer,
Andreotti-Vesentini, Kohn, Nirenberg, H\"ormander, and others (\cite{FK, Siu,
Stra}) bearing fruit in Kohn's solution to the $\dbar$-Neumann problem,
\cite{K1, K2}.  This method heavily involves the analysis of a self-adjoint
Laplace operator $\square$ on differential forms in $\Lambda^{p,q}$, the subject
of this article, which we describe here.

For any integers $p,q$ with $1\leq p,q\leq n$ denote by
$C^\infty(M,\Lambda^{p,q})$ the space of all $C^\infty$ forms of type $(p,q)$ on
$M$. These are the differential forms which can be written in local complex coordinates
$(z_1, z_2,\dots,z_n)$ as
\begin{equation}\label{pqform}\phi=\sum_{|I|=p,|J|=q}\phi_{I,J}\ dz^I\wedge
d\bar z^J \end{equation}
\noindent
where $dz^I=dz^{i_1}\wedge\dots\wedge dz^{i_p}$, $dz^J=d\bar
z^{j_1}\wedge\dots\wedge d\bar z^{j_q}$, $I=(i_1,\dots,i_p)$, $J=(j_1,\dots,
j_q)$, $i_1<\dots<i_p$, $j_1<\dots<j_q$, and the $\phi_{I,J}$ are smooth
functions in local coordinates.  For such a form $\phi$, the value of the
antiholomorphic exterior derivative $\dbar\phi$ is
\[\dbar\phi=\sum_{|I|=p,|J|=q} \sum_{k=1}^n
\frac{\partial\phi_{I,J}}{\partial\bar z^k}\ d\bar z^k\wedge dz^I\wedge d\bar
z^J\]
\noindent
so $\dbar = \dbar|_{p,q}$ defines a linear map
$\dbar:C^\infty(M,\Lambda^{p,q})\to C^\infty(M,\Lambda^{p,q+1})$.

With respect to a smooth measure on $M$ and a smoothly varying Hermitian
structure in the fibers of the tangent bundle, define the spaces
$L^2(M,\Lambda^{p,q})$. Let us extend the above $\dbar$ to the corresponding
maximal operator in $L^2$ (and still call it $\dbar$)
and let $\dbar^\ast$ be its adjoint operator (the forms in the domain of
$\dbar^\ast$ will have to satisfy certain boundary conditions). Then
\begin{eqnarray}\label{qform}
\dom(Q^{p,q})&:=& \dom (\dbar)\cap\dom(\dbar^\ast)\\
Q^{p,q}(\phi,\psi)&:=& \langle\dbar\phi,\dbar\psi\rangle_{L^2(M,
\Lambda^{p,q+1})} +
\langle\dbar^*\phi,\dbar^*\psi\rangle_{L^2(M,\Lambda^{p,q-1})},\end{eqnarray}
\noindent defines a closed form $Q^{p,q}$ on $L^2(M,\Lambda^{p,q})$; we will
frequently omit the superscripts indicating the type of forms and
simply write $Q$ and $\dom(Q)$ instead. By standard theory (see details in
Section \ref{forms} below) there is a unique selfadjoint operator
$\square=\square_{p,q}$ corresponding to $Q=Q^{p,q}$ that we can write as
$$\square=\square_{p,q}=\dbar^\ast\dbar+\dbar\dbar^\ast .$$
This Laplacian $\square$ is elliptic but its natural boundary conditions are not
coercive,
thus, in the interior of $M$, the operator gains two degrees in the Sobolev
scale, as a second-order operator, while in neighborhoods of the boundary it
gains less. The gain at the boundary depends on the geometry of the boundary,
and the best such situation is that in which the boundary is {\it strongly
pseudoconvex}, a condition already seen to be important in \cite{Ha, L1,L2}; see \cite{Siu}. 
In that case, the operator gains one
degree on the Sobolev scale and so global estimates including both interior and
boundary neighborhoods gain only one degree. More precisely, one obtains {\it a
priori} (called Kohn-type) estimates of the form
\[\|u\|_{H^{s+1}(M,\Lambda^{p,q})} \lesssim \|\square u\|_{H^s(M,\Lambda^{p,q})} +
\|u\|_{L^2(M,\Lambda^{p,q})},\quad (u\in \dom\square\cap C^\infty)\]
\noindent
 when the boundary
is strongly pseudoconvex and $q>0$ (\cite{P1}, Lemma 7.11). Such estimates are
usually called {\it subelliptic} as the gain of the operator is less than its
order.

Assuming for the moment that $\bar M$ is compact, under various well-investigated conditions on
$bM$, (\cite{D'A, C}, {\it etc.}) the Laplacian satisfies a {\it pseudolocal
estimate with gain $\epsilon>0$ in} $L^2(M,\Lambda^{p,q})$. That is, if
$U\subset\bar M$ is a neighborhood with compact closure, $\zeta, \zeta'\in
C^{\infty}_{c}(U)$ for which $\zeta'|_{{\rm supp}(\zeta)}=1$, and $\alpha|_U\in
H^{s}(U,\Lambda^{p,q})$, then $\zeta(\square +\bid )^{-1}\alpha\in
H^{s+\epsilon}(\bar M,\Lambda^{p,q})$ and there exists a constant $C_{s,\zeta,
\zeta'}>0$ such that
\begin{equation}\label{prima1}\|\zeta (\square
+\bid)^{-1}\alpha\|_{H^{s+\epsilon}(M,\Lambda^{p,q})}\le C_{s,\zeta, \zeta'}
(\|\zeta'\alpha\|_{H^s(M,\Lambda^{p,q})}+\|\alpha\|_{L^2(M,\Lambda^{p,q})})
\end{equation}
\noindent
uniformly for all $\alpha$ satisfying the assumption. Since $\bar M$ is assumed
compact, Rellich's theorem provides that $(\square +\bid)^{-1}$ is a compact
operator and thus there exists an orthonormal basis of $L^2(M,\Lambda^{p,q})$
consisting  of eigenforms of $\square$, \cite[Prop.\ 3.1.11]{FK}. With the eigenvalues and eigenforms of $\square$, one can construct the heat operator and study it. In our case of noncompact $M$ we will take a different approach. Still, to us, the most important result from the PDE of several complex variables is that a pseudolocal estimate \eqref{prima1} holds even without assuming the
compactness of $M$, as shown in \cite{E}.

\medskip

{\bf Main results:} We will assume throughout this article that we have a complex manifold $M$
which is the total space of a principal bundle on which a Lie group $G$ acts by
holomorphic transformations with compact orbit space $\bar X = \bar M/G$:
\[G \longrightarrow M \longrightarrow X.\]
Throughout this article, we will
assume for simplicity that our manifolds are
strongly pseudoconvex. That implies that a pseudolocal estimate
with gain $\epsilon=1$ holds in $L^2(M,\Lambda^{p,q})$ for all $q>0$. All the
bundles constructed in \cite{HHK} and are treated in \cite{GHS} are strongly pseudoconvex. In our results, one can revert to the more general setting, in which $0<\epsilon<1$, making inessential
changes.

The first of our principal results is a Nash-type inequality, {\it cf.}\ \cite{N}:
\begin{thm}\label{thm1.1}{\rm\bf (Nash inequality)} Let $M$ be a strongly
pseudoconvex $G$-manifold on which $G$ acts freely by holomorphic
transformations with compact quotient $\bar M/G$. For integer $s>\dim_{\mathbb
C}M$
\[\| u\|_{\lzpq}^{2+\frac{1}{s}}\lesssim Q(u) \|
u\|_{L^1(M,\lpq)}^{\frac{1}{s}}, \quad (u\in\dom(Q^{p,q})\cap L^1(M,
\Lambda^{p,q})).\]
\end{thm}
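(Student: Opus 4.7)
The plan is to deduce the Nash inequality from a global $L^{1}\to L^{2}$ bound for the resolvent power $(\square+\bid)^{-s}$, combined with a standard spectral cutoff argument. The bulk of the work is establishing that resolvent bound; the spectral step afterwards is essentially classical.

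\textbf{Step 1 (local $L^{2}\to L^{\infty}$ mapping of $(\square+\bid)^{-s}$).} I iterate the pseudolocal estimate \eqref{prima1}, which has gain $\epsilon=1$ in the strongly pseudoconvex setting, a total of $s$ times, using a nested chain of compactly supported cutoffs $\zeta_{0}\prec\zeta_{1}\prec\cdots\prec\zeta_{s}$ inside a fixed relatively compact $U\subset\bar M$. This yields
\[
\|\zeta_{0}(\square+\bid)^{-s}\alpha\|_{H^{s}(M,\lpq)}\lesssim\|\alpha\|_{\lzpq}.
\]
Since $\bar M$ has real dimension $2n$ and $s>n=\dim_{\CC}M$, Sobolev embedding $H^{s}(U)\hookrightarrow L^{\infty}(U)$ upgrades this to a local $L^{2}\to L^{\infty}$ bound for $(\square+\bid)^{-s}$ on $U$.

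\textbf{Step 2 (globalization via the $G$-action and duality).} Because $\bar X=\bar M/G$ is compact, I choose a finite cover of $\bar X$ together with subordinate cutoffs, and lift these through the quotient map to a $G$-invariant locally finite cover $\{g\cdot U_{i}:g\in G,\ i=1,\ldots,N\}$ of $\bar M$ with a $G$-invariant subordinate partition of unity. Since $\square$ and the Hermitian structure are $G$-invariant, the constant from Step 1 is independent of the translate, so taking the supremum over the cover produces the uniform global estimate
\[
\|(\square+\bid)^{-s}\alpha\|_{\lupq}\lesssim\|\alpha\|_{\lzpq}.
\]
Self-adjointness of $(\square+\bid)^{-s}$ on $\lzpq$ then gives, by duality,
\[
\|(\square+\bid)^{-s}\alpha\|_{\lzpq}\lesssim\|\alpha\|_{L^{1}(M,\lpq)}.
\]

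\textbf{Step 3 (spectral truncation to obtain Nash).} Let $E_{\lambda}$ denote the spectral projector of $\square$ onto $[0,\lambda]$. For $u\in\dom(Q)\cap L^{1}$ and $\lambda>0$ I split
\[
\|u\|_{\lzpq}^{2}=\|E_{\lambda}u\|_{\lzpq}^{2}+\|(\bid-E_{\lambda})u\|_{\lzpq}^{2}.
\]
The high-frequency term is bounded by $\lambda^{-1}Q(u)$ via spectral calculus. For the low-frequency term, write $E_{\lambda}u=E_{\lambda}(\square+\bid)^{s}\cdot(\square+\bid)^{-s}u$, use $\|E_{\lambda}(\square+\bid)^{s}\|_{\lzpq\to\lzpq}\le(1+\lambda)^{s}$ together with the $L^{1}\to L^{2}$ bound of Step 2 to obtain $\|E_{\lambda}u\|_{\lzpq}\lesssim(1+\lambda)^{s}\|u\|_{L^{1}}$. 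Optimizing the choice of $\lambda$ balances the two pieces and produces the exponent $2+1/s$ on $\|u\|_{\lzpq}$, giving the claimed inequality.

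\textbf{Main obstacle.} The delicate step is Step 2: the pseudolocal estimate is inherently local, and one must package it together with the $G$-action so that a single constant works uniformly across the noncompact manifold. This forces care in arranging the cover, the cutoffs, the Sobolev norms, and the resolvent iteration to all respect $G$-invariance; compactness of $\bar X$ is precisely the ingredient that makes such a uniform constant exist.
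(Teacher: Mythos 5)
Your Steps 1 and 2 reproduce, essentially verbatim, the paper's route to Corollary \ref{corso}: iterate the pseudolocal estimate with nested cutoffs to get $\|\zeta(\square+\bid)^{-s}\alpha\|_{H^s}\lesssim\|\alpha\|_{L^2}$, apply the Sobolev lemma (with $s>\dim_{\mathbb C}M=\tfrac12\dim_{\mathbb R}M$), and use compactness of $\bar M/G$ together with $G$-invariance of $\square$ and of the metric data to make the constant uniform; the passage to $\|(\square+\bid)^{-s}\|_{L^1\to L^2}\lesssim 1$ by self-adjointness and duality is likewise the same duality the paper invokes (citing \cite{GKS} for form-valued $L^p$ spaces). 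Where you genuinely diverge is the last step: the paper first converts the resolvent bound into ultracontractivity of $P_t=e^{-t\square}$ via the spectral estimate on $(\lambda+1)^s e^{-\lambda t}$ (Prop.\ \ref{ultra}, Cor.\ \ref{diag}), then combines $\tfrac{d}{dt}\|P_tu\|_{L^2}^2=-2Q(P_tu)$ (Prop.\ \ref{derive}) with $Q(P_tu)\le Q(u)$ to get $\|P_tu\|_{L^2}^2\ge\|u\|_{L^2}^2-2tQ(u)$ and optimizes in $t$; you instead split by the spectral projector $E_\lambda$ and optimize in $\lambda$. These are two classical and essentially equivalent ways to pass from an ultracontractivity-type input to a Nash inequality, and yours has the mild advantage of bypassing the semigroup differentiation in Prop.\ \ref{derive} altogether. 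One caveat: your two-sided bound reads $\|u\|_{L^2}^2\lesssim(1+\lambda)^{2s}\|u\|_{L^1}^2+\lambda^{-1}Q(u)$, and because of the additive $1$ the balancing choice $\lambda=(Q(u)/\|u\|_{L^1}^2)^{1/(2s+1)}$ only yields the stated exponents in the regime $Q(u)\ge\|u\|_{L^1}^2$ (where $\lambda\ge1$); when $Q(u)<\|u\|_{L^1}^2$ you recover only $\|u\|_{L^2}^2\lesssim\|u\|_{L^1}^2$, which is strictly weaker than the claim (note the theorem forces $u=0$ whenever $Q(u)=0$ and $u\in L^1\cap L^2$). The paper's own optimization has the identical blind spot -- it tacitly upgrades $\max(1,t^{-s})$ to $t^{-s}$ when the optimal $t$ exceeds $1$ -- so this does not distinguish your argument from the paper's, but you should flag the restriction rather than assert that the optimization "produces the claimed inequality" unconditionally.
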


\noindent
Defining the heat semigroup by $P_t = e^{-t\square}$, we obtain operator norm
estimates in $L^p$ spaces as well as Sobolev spaces:
\[\|P_t\|_{L^2\to L^\infty}, \qquad \|P_t\|_{L^1\to L^\infty}, \qquad
\|P_t\|_{H^r\to H^s},\]
valid for $t>0$, $r,s\in\mathbb R$. This last property can be used to obtain that the Schwartz kernel of the heat operator is smooth for $t>0$.

We also obtain an off-diagonal estimate
for the heat semigroup in terms of the intrinsic metric $\dint$ induced by
$\dbar:C^\infty(M,\mathbb R)\to C^\infty(M,\Lambda^{0,1})$ and a $G$-invariant Hermitian
structure on $\Lambda^{0,1}$. It turns out that $\dint$ is equivalent to the intrinsic
metric $d_{LB}$ induced by the Laplace-Beltrami operator of a Riemannian metric simply related 
to the metric on $\Lambda^{0,1}$.

The off-diagonal estimate is 

\begin{thm} \label{thm1.2}{\rm\bf (Off-diagonal heat kernel estimate)} Let $M$
be as above. For measurable subsets $A, B$ of $M$ it follows
that the heat
semigroup satisfies
\[\| \bid_B P_t \bid_A\|_{2\to 2}
\le\exp\left[-\frac{\dint (A;B)^2}{4t}\right].\]
\end{thm}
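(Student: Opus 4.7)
The plan is to adapt the classical Davies--Gaffney exponential-perturbation argument to the $\dbar$-Neumann form $Q^{p,q}$. The key ingredient is a commutator estimate for $Q$ under multiplication by $e^{\pm\alpha\psi}$, where $\psi$ is a bounded smooth real-valued function with $|\dbar\psi|^2\le 1$ almost everywhere; by the very definition of the intrinsic metric, such $\psi$ are precisely the $1$-Lipschitz test functions for $\dint$, and the supremum of $\psi(y)-\psi(x)$ over them recovers $\dint(x,y)$.

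The first step is a commutator identity. Multiplication by $e^{\pm\alpha\psi}$ is smooth, bounded, and positive, so it preserves $\dom(\dbar)\cap\dom(\dbar^*)$, since the boundary condition defining $\dom(\dbar^*)$ is a pointwise linear condition on the leading symbol of $\phi$ at $bM$ and is invariant under multiplication by a smooth positive scalar. Leibniz gives
\[\dbar(e^{\alpha\psi}\phi)=e^{\alpha\psi}(\alpha\,\dbar\psi\wedge\phi+\dbar\phi),\qquad \dbar^*(e^{\alpha\psi}\phi)=e^{\alpha\psi}(\dbar^*\phi-\alpha\,\iota_{(\dbar\psi)^\sharp}\phi),\]
and expanding $\operatorname{Re}Q(e^{\alpha\psi}\phi,e^{-\alpha\psi}\phi)$ the cross terms cancel as complex conjugates, leaving
\[\operatorname{Re}Q(e^{\alpha\psi}\phi,e^{-\alpha\psi}\phi)=Q(\phi,\phi)-\alpha^2\bigl(\|\dbar\psi\wedge\phi\|^2+\|\iota_{(\dbar\psi)^\sharp}\phi\|^2\bigr).\]
The Clifford-type anticommutation $\{e_\alpha,e_\alpha^*\}=|\alpha|^2 I$ on Hermitian exterior algebra yields the pointwise identity $|\dbar\psi\wedge\phi|^2+|\iota_{(\dbar\psi)^\sharp}\phi|^2=|\dbar\psi|^2|\phi|^2$, so under $|\dbar\psi|^2\le 1$ one obtains the fundamental inequality
\[\operatorname{Re}Q(e^{\alpha\psi}\phi,e^{-\alpha\psi}\phi)\ge Q(\phi,\phi)-\alpha^2\|\phi\|^2.\]

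The second step is a Gronwall argument. For $u\in\lzpq$ with $\supp u\subset A$ let $v_t:=P_tu$ and $f(t):=\|e^{\alpha\psi}v_t\|^2$. Differentiating and applying the inequality above with $\phi:=e^{\alpha\psi}v_t$, so that $e^{\alpha\psi}\phi=e^{2\alpha\psi}v_t$ and $e^{-\alpha\psi}\phi=v_t$,
\[f'(t)=-2\operatorname{Re}Q(e^{2\alpha\psi}v_t,v_t)\le -2Q(e^{\alpha\psi}v_t,e^{\alpha\psi}v_t)+2\alpha^2f(t)\le 2\alpha^2f(t),\]
hence $f(t)\le e^{2\alpha^2t}\|e^{\alpha\psi}u\|^2$. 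Choose $\psi$ with $\psi\le 0$ on $A$ and $\psi\ge d:=\dint(A;B)$ on $B$ by smoothing and truncating the $\dint$-distance-to-$A$ function (which is itself $\dint$-Lipschitz by definition of the intrinsic metric, after Sturm). Then $\|e^{\alpha\psi}u\|\le\|u\|$ and $\|\bid_BP_tu\|^2\le e^{-2\alpha d}\|e^{\alpha\psi}v_t\|^2$, so $\|\bid_BP_t\bid_A\|_{2\to 2}^2\le e^{-2\alpha d+2\alpha^2t}$, and optimizing over $\alpha=d/(2t)$ yields $\exp[-\dint(A;B)^2/(4t)]$.

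The principal obstacle is the commutator identity together with the preservation of the Neumann boundary condition under $e^{\pm\alpha\psi}$; once these are verified the remainder is routine. A secondary, but standard, technical point is the construction of a bounded smooth $\psi$ interpolating between $0$ on $A$ and $d$ on $B$ while satisfying $|\dbar\psi|^2\le 1$ a.e., handled by mollifying and truncating $\dint(\,\cdot\,,A)$ in the spirit of the Sturm framework for intrinsic metrics.
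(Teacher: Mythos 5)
Your proof is correct and follows essentially the same Davies--Gaffney strategy as the paper: perturb $Q$ by $e^{\pm\alpha\psi}$, observe that the cross terms in the expansion are purely imaginary and vanish upon taking real parts, run Gronwall on $t\mapsto\|e^{\alpha\psi}P_tu\|_{L^2}^2$, and optimize over $\alpha$. Two points of comparison are worth recording. First, where you invoke the pointwise anticommutation identity $|\dbar\psi\wedge\phi|^2+|\iota_{(\dbar\psi)^\sharp}\phi|^2=|\dbar\psi|^2\,|\phi|^2$ to bound the perturbation term by $\alpha^2\|\phi\|^2$, the paper estimates the two contributions $\|\dbar w\wedge u\|^2$ and $\|\partial w\wedge\star u\|^2$ separately by Cauchy--Schwarz, obtaining the constant $2$ in Corollary \ref{cormin} but then using the constant $1$ in \eqref{corapp}; your identity is precisely what justifies the sharper constant (and hence the $4t$ rather than $8t$ in the final exponent), so this is a genuine, if small, tightening of the bookkeeping. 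Second, your construction of $\psi$ by ``smoothing and truncating'' $\dint(\cdot,A)$ is both unnecessary and the one soft spot in the argument: that distance function need not be smooth up to $\bar M$, and mollification does not obviously preserve the constraint $\langle\dbar\psi,\dbar\psi\rangle_{\Lambda^{0,1}}\le1$ nor the exact lower bound $\psi\ge d$ on $B$. Since $\dint(A;B)$ is \emph{defined} as a supremum over admissible test functions, you should simply pick an admissible $w$ with $\inf_Bw-\sup_Aw\ge\dint(A;B)-\varepsilon$, normalize $\sup_Aw=0$, run your argument to get the bound $\exp[\alpha^2t-\alpha(\dint(A;B)-\varepsilon)]$, and let $\varepsilon\to0$ at the end; this is what the paper does and it removes the only step of your write-up that would require additional justification.
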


\noindent
Note the following peculiarity: as already pointed out, the $\dbar$-Neumann problem is not
elliptic in the sense that inverse of $\square$ does not gain two degrees in the Sobolev scale.
This is due to the boundary conditions, which, even in the
strictly pseudoconvex case, give a gain of only one order of differentiability. Our
method of proof does not make use of the better estimates that are valid in the
interior, where the gain is two as in \cite[Thm.\ 2.2.9]{FK}. The resulting Sobolev
estimates make our Nash inequality somewhat weaker than what
would be true for an elliptic operator with coercive boundary conditions. 

On the other hand, the off-diagonal bound is not affected at all by this. The intrinsic
metric gives just the kind of decay that one would expect for an elliptic problem.

Part of what is happening here is that the pseudolocal estimate that we use is given in terms of isotropic Sobolev norms while the problem is inherently anisotropic. In the compact case, anisotropic estimates have 
been worked out \cite{FS, GS} and it happens that the Laplacian gains two orders
of differentiability in all directions except one ``bad'' direction in the boundary. 

The pseudolocal and Kohn-type estimates that we use here were developed in the noncompact case in \cite{E, P1} and applied in \cite{P2} and \cite{DSP} to construct $L^2$ holomorphic functions in some cases, in a manner analogous to that of Kohn and Gromov, Henkin, Shubin, \cite{K1, K2, FK, GHS}. This last reference contains other examples (regular covering spaces of compact, strongly pseudoconvex complex manifolds and two nonunimodular $G$-manifolds) to which our methods here apply.

The spectral theory of the $\dbar$-Neumann problem has been
previously investigated in \cite{M, F1, F2} in the compact
situation and in \cite{St, ST, BGS, BeS1, BeS2}, methods involving pseudodifferential operators
are brought to bear on the problem, still in the compact case. In \cite{DGSC}, heat kernel asymptotics are developed for subelliptic operators on noncompact groups. In \cite{MO}, an asymptotic expansion is developed for the heat kernel of a general elliptic operator with noncoercive boundary conditions.

The contents of the rest of this article are as follows. In Section 2 we will describe the
basic constructions on $M$ and review the principal properties of the
$\dbar$-Neumann problem relevant to our investigation. Also, we will draw the
more directly accessible conclusions of these properties. In Section 3 we
describe the intrinsic geometry carried by $M$ and derive the heat estimates for
the $\dbar$-Neumann Laplacian.

\section{The $\dbar$-Neumann problem}
\subsection{The geometry}\label{geometry} We will introduce some complex-geometric concepts in this section, basically following
\cite{FK}; see also \cite{Kob,Kod}. On a real, $2n$-dimensional $C^\infty$ manifold $M$, an \emph{almost complex structure on} $M$ is a
splitting of the complexification $TM\otimes_{\mathbb R} \bC$ of the real tangent
bundle $TM$,
$$
TM\otimes_{\mathbb R} \bC=T_{1,0}M\oplus T_{0,1}M,
$$
with the following property; denoting the projections onto $T_{1,0}M$ and
$T_{0,1}M$ by $\Pi_{1,0}$ and $\Pi_{0,1}$, respectively:
\begin{equation}\label{bar}\Pi_{0,1}\zeta = \overline{\Pi_{1,0}\bar \zeta},\end{equation}
where $\ \bar{\ }\ $ denotes complex conjugation.

We can also describe an  almost complex structure by a
fibrewise linear mapping $J:TM\to TM$ with $J^2=-\bid$. These two descriptions are
related via:
\begin{equation}\label{T10}T_{1,0}M=\{ X-iJX\mid X\in TM\}=\ker(J-i)\end{equation}
and
\begin{equation}\label{T01}T_{0,1}M=\{ X+iJX\mid X\in TM\}=\ker(J+i),\end{equation}
see \cite[Chapter I, \S7]{Kob}. For a vector field $X\in TM$, a complex vector field in $TM\otimes_{\mathbb R}\mathbb C$ of the form $X- iJX\in T_{1,0}$ is called a holomorphic vector field while one of the form $X+iJX\in T_{0,1}$
is called antiholomorphic.

Dually, the projections $\Pi_{0,1}$, $\Pi_{1,0}$ induce a splitting of the exterior powers of the complexified cotangent bundle, $\Lambda^k T^*M\otimes_{\mathbb R}\mathbb C$ into holomorphic and antiholomorphic parts so that $\Lambda^k =
\bigoplus_{p+q=k}\Lambda^{p,q}$. The exterior derivative in $\Lambda^kT^*M$ can be combined with the splittings of the complexified cotangent bundle of $M$ to obtain holomorphic and antiholomorphic exterior derivatives $\partial$ and $\dbar$, respectively. The relations among these operators are given by
$$
\dbar:C^\infty(\bar M, \Lambda^{p,q})\to C^\infty(\bar M, \Lambda^{p,q+1}),
\qquad \dbar \phi = \Pi_{p,q+1}d\phi$$
and
$$
\partial:C^\infty(\bar M, \Lambda^{p,q})\to C^\infty(\bar M, \Lambda^{p+1,q}),
\qquad \partial \phi = \Pi_{p+1,q}d\phi
$$
for $\phi\in C^\infty(\bar M, \Lambda^{p,q})$.

On a complex manifold, it is true that $d=\partial + \dbar$, see \cite[Prop.\ 1.2.1]{FK} and that $\dbar^2=0$, which gives rise to the $\dbar$-\emph{complex},
$$
0\to C^\infty(\bar M, \Lambda^{p,0})\stackrel{\dbar}{\to} C^\infty(\bar M,
\Lambda^{p,1}) \stackrel{\dbar}{\to} \dots\stackrel{\dbar}{\to} C^\infty(\bar M,
\Lambda^{p,n})\to 0
$$
which is the starting point for various cohomology theories due to Dolbeault, Hodge-Kodaira, and unified by Spencer, {\it cf.}\ \cite{KN}. See also \cite{P1} for some results related to our current setting.

\subsection{Sobolev spaces} We will have to describe smoothness of functions,
forms, and sections of vector bundles using $G$-invariant Sobolev spaces which
we describe here.

If $E$ is a vector $G$-bundle over $M$, then we may introduce a $G$-invariant
pointwise inner product structure $\langle \cdot, \cdot \rangle_E$ on $E$.
Together with a $G$-invariant measure on $M$, we define the Hilbert space of
sections of $E$ which we denote $L^2(M,E)$. Note that, in particular, spaces of
sections in natural tensor bundles on a $G$-manifold have natural, invariant
Hermitian structures associated to a Riemannian structure on the underlying
manifold; below we will provide more detail. We denote by $C^\infty(M,\lpq)$ the
space of smooth $(p,q)$-forms on $M$, by $\cbpq$ the subspace of those forms
that can be smoothly extended to $\bar M$ and by $\ccpq$ the subspace of the
latter consisting of those smooth forms with compact support. Given any
$G$-invariant, pointwise Hermitian structure
\[C^\infty(\bar M,\Lambda^{p,q})\ni u,v \longmapsto \langle u(x) ,v(x)
\rangle_{\Lambda_x^{p,q}} \in\CC, \quad (x\in \bar M),\]
\noindent
we define the $L^p$-spaces $L^p(M,\Lambda^{q,r})$ of forms as the completions of
$C_c^\infty(\bar M,\Lambda^{q,r})$ in the norms
\[\|u\|_{L^p(M,\Lambda^{q,r})} = \left[\int_M \langle
u,u\rangle_{\Lambda^{q,r}}^{p/2}\right]^{1/p}.\]
\noindent
As in \cite{Gro, Shu} we may construct appropriate partitions of unity and, by
differentiating componentwise with respect to local geodesic coordinates,
assemble $G$-invariant integer Sobolev spaces $H^s(M,\Lambda^{p,q})$, for
$s=0,1,2,\dots$. Because $X=\bar M/G$ is compact, the spaces
$H^s(M,\Lambda^{p,q})$ do not depend on the choices of an invariant metric
on $M$ or of an invariant inner product on
$\Lambda^{p,q}$. The usual duality relations for $L^p$ spaces hold (polarizing
the above norm) as well as the Sobolev lemma, {\it
etc.} Background on this is provided in \cite{GKS}. There, the Hermitian structure is defined in terms of the Hodge operator so our $\langle u,v\rangle_\Lambda$ translates to $u\wedge\star\bar v$. See p82 of \cite{FK} and Lemma \ref{twist} below.

\subsection{Operators and forms}\label{forms}
As we said in the introduction, $\square$ will be defined in terms of an
associated quadratic form. Good references for background on the general concept of closed
forms and their associated operators are  \cite{F,K,RS}, among others.
Here we will give more details concerning the case at hand and also describe
certain subsets of smooth forms that belong to the respective form and operator
domains.
We begin by collecting some information concerning the building blocks of
$\square$,  $\dbar$ and $\dbar^*$.

\begin{rem} Let $M$ be as above.
 \begin{enumerate}
  \item The maximal operator $\dbar$ in $\lzpq$ is given by:
$\alpha\in \dom(\dbar)$ whenever $\dbar\alpha \in L^2(M,\Lambda^{p,q+1})$ in
the distributional sense. It acts from $\lzpq$ to $L^2(M,\Lambda^{p,q+1})$ and
is a closed operator.
 \item The operator $\dbar^*$ in $\lzpq$ is the adjoint of $\dbar$ (in
$L^2(M,\Lambda^{p,q-1})$); it is given by:
$\alpha\in \dom(\dbar^*)$ whenever there exists $\beta\in
L^2(M,\Lambda^{p,q-1})$ so that
$$
\langle \dbar\gamma, \alpha\rangle_{\lzpq} =\langle \gamma,
\beta\rangle_{L^2(M,\Lambda^{p,q-1})} $$
for all $\gamma\in L^2(M,\Lambda^{p,q-1})$ and $\dbar^*\alpha=\beta$.
 \item Since $\dbar$ is closed, the form
$$\dom (\dbar)\times \dom
(\dbar)\ni (\alpha,\beta)\mapsto \langle
\dbar\alpha,\dbar\beta\rangle_{L^2(M,\Lambda^{p,q+1})}$$
is a closed form in $\lzpq$; cf \cite{F,K}.
 \item Since $\dbar^*$ is closed, the form
$$\dom (\dbar^*)\times \dom
(\dbar^*)\ni (\alpha,\beta)\mapsto \langle
\dbar^*\alpha,\dbar^*\beta\rangle_{L^2(M,\Lambda^{p,q-1})}$$
is a closed form in $\lzpq$, provided, $q\ge 1$.
  \item $Q=Q^{p,q}$ is the sum of the closed forms defined in (3), (4)    above
and therefore a closed form as well for $q\ge 1$. $Q^{p,0}$ is the form defined
in (3).
 \end{enumerate}
\end{rem}
Recall that a closed operator is one whose graph is closed, while a form $Q$ is
\emph{closed}  whenever its domain $\dom(Q)$ is a Hilbert space with respect to
the \emph{form inner product}
$$
(\cdot\mid\cdot)_Q := Q(\cdot,\cdot) + \langle\cdot,\cdot\rangle .
$$
The stage is now set for the first form representation theorem, {\it cf.}\ \cite{K},
that asserts that for every semibounded closed form there is a unique
selfadjoint operator associated with the form. In our case, there is a unique
selfadjoint operator $\square_{p,q}$ associated with $Q^{p,q}$, meaning that
\[\dom(\square_{p,q})\subset \dom(Q^{p,q}) \mbox{  and  }
Q(\alpha,\beta)=\langle \square \alpha,\beta\rangle,\]
\noindent whenever $\alpha\in \dom(\square_{p,q})$ and $\beta\in \dom(Q^{p,q})$.
In fact, more is known:
$$
\dom(\square_{p,q})=\{ \alpha\mid \exists \gamma\in \lzpq\,
\forall\beta\in\dom(Q^{p,q}):\,
Q^{p,q}(\alpha,\beta)=\langle\gamma,\beta\rangle\}
$$
and, obviously, $\gamma=\square_{p,q}\alpha$ is uniquely determined. Moreover,
defining the square root $\square_{p,q}^\frac12$ by the functional calculus, we
have that
\[\dom(Q^{p,q})=\dom(\square_{p,q}^\frac12) \mbox{  and  }
Q(\alpha,\beta)=\langle \square^\frac12\alpha,\square^\frac12\beta\rangle.\]
We note that $\square$ can be seen as the \emph{form sum} of the operators
$\dbar^*\dbar$ and $\dbar\dbar^*$. In fact, the former operator is the
selfadjoint operator associated with the form in part (3) of the preceding
remark and the latter is the selfadjoint operator associated with the form in
part (4) of the preceding remark. In that sense, the formula
$$
\square= \dbar^*\dbar+ \dbar\dbar^*
$$
has now a precise meaning, interpreting the +-sign as the form sum, {\it cf.}\
\cite{F,K}.

In principle, all domain questions are settled now and we have defined the forms
and operators we will be dealing with. However, the results above give a rather
implicit description so it is quite useful to have explicit subspaces of the
operator and form domains given above.

 We speak of a \emph{core} of a form meaning a  subspace of its domain that
is dense in the domain with respect to the form norm. Similarly, a core of an
operator is a subspace of its domain that is dense with respect to the graph
norm.

 The following lemma is  from \cite[Lemma 1.1]{GHS} and \cite[Lemma
2.3.2]{FK}; serves the purpose to get our hands on the smooth elements of
certain form and operator domains.

\begin{lemma}\label{dbar} Let $M$ be as above, let $\vartheta$ be
the formal adjoint operator to $\dbar$, and denote by
$\sigma=\sigma(\vartheta,\cdot)$ its principal symbol.
\medskip
\item{(i)}
$\{u\in C_c^\infty(\bar M, \Lambda^\bullet
)\mid\sigma(\vartheta,d\rho)u|_{bM}=0\}$ is a core for $\dbar^*$ and on this
space $\dbar^*$ agrees with $\vartheta$.
\item{(ii)} $\dpq:= \{u\in C^\infty(\bar M, \Lambda^\bullet
)\mid\sigma(\vartheta,d\rho)u|_{bM}=0\}$ is a core for $Q^{p,q}$.
\item{(iii)} The domains of $\dbar^*$ and $Q$ are preserved by multiplication by
cutoff functions.
\end{lemma}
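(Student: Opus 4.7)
The plan is to establish (i) first via a Green's identity, deduce (iii) from the Leibniz rule, and then combine them to get (ii). Noncompactness of $M$ causes no serious trouble since $\bar M/G$ is compact, so $G$-equivariant objects (a defining function, partitions of unity, and cutoffs with arbitrarily small derivative) can be built by construction downstairs on $X$ together with a standard cutoff in the group direction, as in \cite{Shu,GKS}.

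For (i), fix a $G$-invariant defining function $\rho$ for $bM$. For $u\in C_c^\infty(\bar M,\Lambda^{p,q})$ and $v\in C_c^\infty(\bar M,\Lambda^{p,q-1})$, Stokes' theorem gives the Green identity
\[
\langle \dbar v,u\rangle_{\lzpq}
=\langle v,\vartheta u\rangle_{L^2(M,\Lambda^{p,q-1})}
+\int_{bM}\langle v,\sigma(\vartheta,d\rho)u\rangle\, dS.
\]
Since $v|_{bM}$ is arbitrary, the boundary term vanishes for every such $v$ precisely when $\sigma(\vartheta,d\rho)u|_{bM}=0$; so these $u$ lie in $\dom(\dbar^*)$ with $\dbar^*u=\vartheta u$. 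The real work--and the main obstacle--is the density statement: an arbitrary $u\in\dom(\dbar^*)$ must be approximated in graph norm by such smooth forms. Following \cite[Prop.\ 2.3.1]{FK}, I would use a boundary-adapted Friedrichs lemma: in a tubular collar of $bM$, mollify only in the directions tangential to $bM$, so that the pointwise condition $\sigma(\vartheta,d\rho)u|_{bM}=0$ is preserved; in the interior apply an ordinary mollifier; patch via a partition of unity. Graph-norm convergence then follows from the standard commutator estimate $[\vartheta,J_\varepsilon]u\to 0$ in $L^2$ as $\varepsilon\to 0$ for the tangential mollifier $J_\varepsilon$.

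For (iii), let $\zeta\in C_c^\infty(\bar M,\mathbb R)$ and $u\in\dom(\dbar^*)$. Testing against an arbitrary $\gamma\in C_c^\infty(\bar M,\Lambda^{p,q-1})$ gives
\[
\langle \dbar\gamma,\zeta u\rangle
=\langle \dbar(\zeta\gamma)-\dbar\zeta\wedge\gamma,u\rangle
=\langle \gamma,\zeta\dbar^*u\rangle-\langle \gamma,\sigma(\vartheta,d\zeta)u\rangle,
\]
using in the second equality that wedge with $\dbar\zeta$ on the left is the formal adjoint of $\sigma(\vartheta,d\zeta)$ on the right. Hence $\zeta u\in\dom(\dbar^*)$ with $\dbar^*(\zeta u)=\zeta\dbar^*u-\sigma(\vartheta,d\zeta)u\in L^2$. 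The dual Leibniz identity $\dbar(\zeta u)=\zeta\dbar u+\dbar\zeta\wedge u$ shows $\dom(\dbar)$ is preserved, hence so is $\dom(Q)$.

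Finally (ii) combines (i) and (iii) in two stages. Given $u\in\dom(Q)=\dom(\dbar)\cap\dom(\dbar^*)$, first apply a cutoff sequence $\zeta_k\in C_c^\infty(\bar M)$ with $\zeta_k\to 1$ pointwise and $|d\zeta_k|\to 0$ uniformly (available by exhausting $M$ by $G$-translates of a fundamental domain over $\bar M/G$). By (iii) each $\zeta_k u\in\dom(Q)$; dominated convergence applied to the Leibniz commutators from the calculation in (iii) yields $\zeta_k u\to u$ in the form norm of $Q$. For compactly supported $u$ one then applies the boundary-respecting mollifier of (i) simultaneously for $\dbar$ and $\dbar^*$, obtaining approximations in $\dpq$ convergent in both graph norms, hence in the form norm. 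Throughout, the only genuinely hard piece is the construction of the tangential mollifier preserving the pointwise boundary condition on $bM$; once that is in hand the rest is algebra and bookkeeping.
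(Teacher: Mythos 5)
The paper does not prove this lemma; it is stated as a quotation of \cite[Lemma 1.1]{GHS} and \cite[Lemma 2.3.2]{FK}. Your outline reconstructs exactly the argument in those sources: the Green identity identifying $\dbar^*$ with $\vartheta$ on forms satisfying the boundary condition, the boundary--adapted (tangential) Friedrichs mollifier to get density while preserving $\sigma(\vartheta,d\rho)u|_{bM}=0$, the Leibniz computation for (iii), and cutoffs $\zeta_k$ with $\|d\zeta_k\|_\infty\to0$ to pass from compactly supported forms to all of $\dom(Q)$ in the noncompact $G$-setting (which is the extra step \cite{GHS} adds to \cite{FK}). The proposal is therefore consistent with the approach the paper invokes. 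Two small points worth making explicit if you were to write this out in full: first, in (iii) you test $\zeta u$ only against $\gamma\in C^\infty_c(\bar M,\Lambda^{p,q-1})$, so you need the (easier, boundary-condition-free) fact that such $\gamma$ form a core for the maximal $\dbar$ to conclude $\zeta u\in\dom(\dbar^*)$; and second, the tangential mollification must be verified to control \emph{both} $\dbar$ and $\vartheta$ simultaneously in $L^2$, since (ii) requires convergence in the form norm of $Q$, not just the $\dbar^*$-graph norm--this is precisely the commutator estimate carried out in \cite[Lemma 2.3.2]{FK}.
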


\subsection{Estimates for the Laplacian}\label{ests} In this section we give our requirements on the boundary geometry and state the pseudolocal estimate in more precise language than in the introduction. As before, assume $M$ to be a complex manifold with nonempty smooth boundary $bM$, $\bar M=M\cup bM$, so that $M$ is the interior of $\bar M$, and ${\rm dim}_{\mathbb C}(M)=n$.  We will also assume for simplicity that $\bar M$ is a closed subset in $\widetilde{M}$, a complex neighborhood of $\bar M$ so that the complex structure on $\widetilde{M}$ extends that of $M$, and every point of $\bar M$ is an interior point of $\widetilde{M}$. Let us choose a smooth function $\rho :\widetilde{M}\to \mathbb R$ so that
\[ M=\{z\mid \rho(z)<0\}, \ \ bM = \{z\mid \rho(z)=0\},\]
\noindent
and for all $x\in bM$, we have $d\rho(x)\neq 0$.
In local coordinates near any $x\in bM$ define the {\it holomorphic tangent plane} to the boundary at $x$ by
\[ T_{x}^{\mathbb C}(bM) = \{w\in \mathbb C^{n}\mid \sum_{k=1}^{n}\left.\frac{\partial\rho}{\partial z^k}\right|_x w^k=0\}\]
and define the Levi form $L_{x}$ by
\[ L_{x}(w,\bar w) =
  \sum_{j, k=1}^{n} \left.\frac{\partial^{2}\rho}{\partial z^j\partial \bar z^k}\right|_x  w^j\bar w^k,\quad (w \in T^{\mathbb C}_x(bM)).\]
Then $M$ is said to be {\it strongly pseudoconvex} if for every $x\in bM$, the form $L_{x}$ is positive definite.

The following theorem will be our principal tool from the PDE of several complex variables.
\begin{theorem}\label{globalize}{\rm\bf (Pseudolocal estimate)} Let $M$ be strongly pseudoconvex, $U$ an open
subset of $\bar M$ with compact closure, and $\zeta, \zeta'\in
C^{\infty}_{c}(U)$ for which $\zeta'|_{\supp(\zeta)}=1$.  If $q>0$ and
$\alpha|_{U}\in H^{s}(U,\Lambda^{p,q})$, then $\zeta(\square +1)^{-1}\alpha\in
H^{s+1}(\bar M,\Lambda^{p,q})$ and there exist constants $C_s>0$ so that
 \begin{equation}\label{prima}\|\zeta (\square
+\bid)^{-1}\alpha\|_{H^{s+1}(M,\Lambda^{p,q})}\le
C_s(\|\zeta'\alpha\|_{H^s(M,\Lambda^{p,q})}+\|\alpha\|_{L^2(M,\Lambda^{p,q})}
).\end{equation}
 \end{theorem}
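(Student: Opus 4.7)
The plan is to reduce the noncompact pseudolocal estimate to the classical compact Kohn-type subelliptic estimate by using cutoffs and a commutator/induction argument. Since $\zeta$ is compactly supported in $U \subset \bar M$, and the strong pseudoconvexity hypothesis guarantees the local subelliptic gain on the support of $\zeta'$, the non-compactness of $M$ is essentially irrelevant once we localize: everything we need will take place inside the relatively compact set $\overline{U}$.

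First I would recall the standard \emph{a priori} subelliptic estimate for strongly pseudoconvex domains (\cite{FK}, Thm.\ 2.2.9 and its boundary version; see also \cite{P1}, Lemma 7.11). It says that, for $q>0$ and for smooth forms $v \in \dpq$ with compact support in $\overline{U}$,
\[
\|v\|_{H^{k+1}(M,\lpq)} \lesssim \|\square v\|_{H^{k}(M,\lpq)} + \|v\|_{L^2(M,\lpq)},
\]
the implicit constant depending only on the boundary geometry over $\overline{U}$. This is purely local in nature, proven via Kohn's tangential-Sobolev / elliptic regularization procedure, and in the noncompact setting it continues to hold because the hypotheses invoke only compactly supported forms. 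A standard density argument (Lemma \ref{dbar}, combined with Friedrichs mollification tangent to $bM$) extends the estimate to all $v \in \dom(\square)$ with $\supp v \subset \overline{U}$ and $\square v \in H^k$.

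Next I would run a finite induction on $s$. Put $u:=(\square+\bid)^{-1}\alpha$ and note $\|u\|_{L^2}\le\|\alpha\|_{L^2}$. Choose a chain of cutoffs $\zeta=\zeta_0,\zeta_1,\ldots,\zeta_s=\zeta'$ in $C_c^\infty(U)$ such that $\zeta_{j+1}\equiv 1$ on $\supp\zeta_j$. By part (iii) of Lemma \ref{dbar}, each $\zeta_j u \in \dom(\square)$, and
\[
\square(\zeta_j u) \;=\; \zeta_j\,\square u + [\square,\zeta_j]\,u \;=\; \zeta_j(\alpha-u) + [\square,\zeta_j]\,u.
\]
The crucial point is that $[\square,\zeta_j]$ is a first-order differential operator whose coefficients are supported in $\{d\zeta_j\ne 0\}\subset\{\zeta_{j+1}=1\}$. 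Applying the basic estimate to $\zeta_j u$ and bounding the commutator yields
\[
\|\zeta_j u\|_{H^{k+1}} \;\lesssim\; \|\zeta_j\alpha\|_{H^k} + \|\zeta_{j+1} u\|_{H^{k+1/?}} + \|u\|_{L^2},
\]
more precisely a term of the form $\|\zeta_{j+1} u\|_{H^k}$ coming from the first-order commutator acting on $u$. Iterating this inequality along the chain of cutoffs, starting with the $L^2$ bound $\|\zeta_s u\|_{L^2}\le\|\alpha\|_{L^2}$ and raising the regularity index one step at a time until we reach $\|\zeta_0 u\|_{H^{s+1}}$, produces the desired pseudolocal estimate.

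The main obstacle is the usual one in subelliptic theory: justifying the regularity step, i.e.\ showing $\zeta_j u$ really lies in $H^{k+1}$ (not merely satisfying a formal estimate) and that the boundary condition $\sigma(\vartheta,d\rho)\,(\zeta_j u)|_{bM}=0$ is preserved throughout. This is handled in the standard way by introducing tangential difference-quotients or Friedrichs mollifiers parallel to $bM$, applying the a priori estimate to the mollified forms, and passing to the limit; the preservation of the $\dbar^\ast$-boundary condition under tangential smoothing is the technical heart of Kohn's proof and carries over verbatim here since all computations are confined to $\overline{U}$. Once this step is in place, the induction runs routinely and delivers \eqref{prima} with a constant $C_s$ depending on the chain of cutoffs and the geometry over $\overline{U}$.
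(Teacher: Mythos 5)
The paper's own ``proof'' is a one-line citation to Folland--Kohn Prop.~3.1.1 and to Engli\v s's noncompact extension; you instead attempt to reconstruct the argument, which is more ambitious. Your overall strategy --- confine everything to the relatively compact set $\overline{U}$ using a chain of cutoffs, invoke a Kohn-type a priori estimate for compactly supported forms, and induct on the Sobolev index --- is the right skeleton and is broadly what the cited references do. However, the iteration step as you wrote it contains a genuine gap.

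The problem is the commutator term. You apply the a priori estimate $\|v\|_{H^{k+1}}\lesssim\|\square v\|_{H^k}+\|v\|_{L^2}$ to $v=\zeta_j u$, split $\square(\zeta_j u)=\zeta_j(\alpha-u)+[\square,\zeta_j]u$, and assert that the commutator contributes ``a term of the form $\|\zeta_{j+1}u\|_{H^k}$.'' But $[\square,\zeta_j]$ is a \emph{first-order} differential operator, so the natural estimate is $\|[\square,\zeta_j]u\|_{H^k}\lesssim\|\zeta_{j+1}u\|_{H^{k+1}}$ --- the same regularity (with a larger cutoff) as the left-hand side. Your induction therefore does not close: at the step where you want to establish $\zeta_{s-j}u\in H^j$, the inductive hypothesis only gives $\zeta_{s-j+1}u\in H^{j-1}$, hence $[\square,\zeta_{s-j}]u\in H^{j-2}$, and you cannot control $\|\square(\zeta_{s-j}u)\|_{H^{j-1}}$. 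The $H^{k+1/?}$ in your display hints that you sensed the issue; the ``more precisely $H^k$'' claim is not justified.

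The standard resolution --- and the actual content of Folland--Kohn Prop.~3.1.1 and of Engli\v s's extension --- is to work at the level of the quadratic form $Q$ rather than the operator $\square$. Multiplication by a cutoff commutes with $\dbar$ and with the formal adjoint $\vartheta$ modulo \emph{zeroth-order} operators (exterior and interior multiplication by $d\zeta$), so that $Q(\zeta u,\zeta u)\lesssim |Q(u,\zeta^2 u)|+\|\zeta'u\|_{L^2}^2$ with no loss of derivatives; one then couples this with the subelliptic half-estimate $\|\zeta u\|_{H^{1/2}}^2\lesssim Q(\zeta u,\zeta u)+\|\zeta u\|_{L^2}^2$ and iterates using \emph{tangential} operators $\Lambda^s_t$, whose commutators with $\dbar$ and $\vartheta$ remain of tangential order $s$ rather than $s+1$. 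Normal regularity is recovered afterward from the equation in a special boundary frame. That bookkeeping is what eliminates the circularity, and it cannot be replaced by the operator-level commutator estimate you proposed. You do correctly flag the tangential mollification and the preservation of the $\dbar^*$-boundary condition as the technical heart; the iteration scheme preceding it, though, must be the form-level one for the induction to close.
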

 \begin{proof}This is Prop.\ 3.1.1 from \cite{FK} extended to the noncompact
case in \cite{E}. \end{proof}
\begin{rem}  Boundary geometries giving more general subelliptic estimates are
harder to define, so we refer the interested reader to \cite{D'A, C} instead of pursuing this issue here. For completeness, we mention that the theorem holds
when $M$ satisfies these weaker estimates, {\it mutatis
mutandis} \cite{E}.

A word on notation: For two norms $\|\cdot\|$ and $|\cdot|$, we write $|\phi|\lesssim\|\phi\|$ to mean that there exists a constant $C>0$ such that $|\phi|\le C\|\phi\|$ for $\phi$ in whatever set relevant to the context. \end{rem}
\begin{corollary} For $s\in\mathbb N$, $q>0$, and $\zeta\in C^\infty_c(\bar M)$,
\begin{equation}\label{fun}\|\zeta (\square + \bid)^{-s}
\alpha\|_{H^s(M,\Lambda^{p,q})} \lesssim
\|\alpha\|_{L^2(M,\Lambda^{p,q})},\qquad (\alpha\in L^2(M,
\Lambda^{p,q})).\end{equation}
\end{corollary}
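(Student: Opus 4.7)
The plan is a straightforward induction on $s$, feeding the pseudolocal estimate of Theorem \ref{globalize} into itself to gain one derivative at each step.

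For the base case $s=1$, I would choose $\zeta'\in C^\infty_c(\bar M)$ with $\zeta'\equiv 1$ on $\supp(\zeta)$; since $\zeta$ has compact support in $\bar M$ such a $\zeta'$ always exists. Theorem \ref{globalize} applied to $\alpha\in L^2$ then yields
\[
\|\zeta(\square+\bid)^{-1}\alpha\|_{H^1(M,\Lambda^{p,q})}
\lesssim \|\zeta'\alpha\|_{L^2} + \|\alpha\|_{L^2}
\lesssim \|\alpha\|_{L^2(M,\Lambda^{p,q})},
\]
since $\|\zeta'\alpha\|_{L^2}\le\|\zeta'\|_\infty\|\alpha\|_{L^2}$.

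For the inductive step, assume \eqref{fun} at level $s-1$ for every $\zeta\in C^\infty_c(\bar M)$. Given $\zeta$, pick $\zeta'\in C^\infty_c(\bar M)$ with $\zeta'\equiv 1$ on $\supp(\zeta)$ and write $\beta:=(\square+\bid)^{-(s-1)}\alpha$, so that $(\square+\bid)^{-s}\alpha=(\square+\bid)^{-1}\beta$. Applying Theorem \ref{globalize} with data $\beta\in L^2$ (whose restriction to a neighborhood of $\supp(\zeta')$ lies in $H^{s-1}$ by the inductive hypothesis) gives
\[
\|\zeta(\square+\bid)^{-1}\beta\|_{H^s(M,\Lambda^{p,q})}
\lesssim \|\zeta'\beta\|_{H^{s-1}(M,\Lambda^{p,q})} + \|\beta\|_{L^2(M,\Lambda^{p,q})}.
\]
The first term on the right is bounded by $\|\alpha\|_{L^2}$ by the induction hypothesis applied to $\zeta'$ (at level $s-1$); the second term is bounded by $\|\alpha\|_{L^2}$ because the resolvent $(\square+\bid)^{-1}$ is a contraction on $L^2$ (as $\square\ge 0$ via the form $Q$). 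Combining these two bounds closes the induction.

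There is really no serious obstacle here; the only point that requires a small amount of care is the choice of the nested cutoffs. If one prefers an explicit (non-inductive) presentation, one fixes in advance a chain $\zeta=\zeta_s,\zeta_{s-1},\ldots,\zeta_0\in C^\infty_c(\bar M)$ with $\zeta_{k-1}\equiv 1$ on $\supp(\zeta_k)$, and iterates the pseudolocal estimate $s$ times, using at each stage that $\|(\square+\bid)^{-k}\alpha\|_{L^2}\le\|\alpha\|_{L^2}$ to absorb the lower-order tail term. Either way, the result follows at once from Theorem \ref{globalize}.
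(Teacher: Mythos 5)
Your proof is correct and follows essentially the same route as the paper: induction on $s$, applying the pseudolocal estimate of Theorem \ref{globalize} to $(\square+\bid)^{-(s-1)}\alpha$ and absorbing the tail terms via the induction hypothesis and the $L^2$-contractivity of the resolvent. Your explicit attention to the nested cutoffs $\zeta'\equiv 1$ on $\supp(\zeta)$ is slightly more careful than the paper's write-up, but it is the same argument.
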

\begin{proof} By induction. Putting $s=0$ in the theorem, we have
\[\|\zeta (\square +\bid)^{-1}\alpha\|_{H^1}\lesssim
\|\zeta'\alpha\|_{L^2}+\|\alpha\|_{L^2} \lesssim \|\alpha\|_{L^2}, \qquad
(\alpha\in L^2(M)).\]
\noindent
Assuming the result for $s-1$, it follows that $(\square + \bid)^{1-s}\alpha \in
H_{\rm loc}^{s-1}(M)$ for all $\alpha\in L^2(M)$. Applying the theorem to this
form, we have
\[\|\zeta (\square +\bid)^{-1}(\square + \bid)^{1-s}\alpha\|_{H^s}\lesssim
\|\zeta'(\square + \bid)^{1-s}\alpha\|_{H^{s-1}}+\|(\square +
\bid)^{1-s}\alpha\|_{L^2},\]
\noindent
and
\[\|\zeta(\square + \bid)^{-s}\alpha\|_{H^s}\lesssim\|(\square +
\bid)^{1-s}\alpha\|_{L^2} \lesssim \|\alpha\|_{L^2}.\]
\end{proof}
\begin{corollary} \label{corso} Let $M$ be a strongly pseudoconvex $G$-manifold on which $G$ acts freely by holomorphic transformations with compact quotient $\bar M/G$. For integer $s>\dim_{\mathbb C}M$ and $q>0$ we
have the estimate
\begin{equation}\label{coso}\|(\square + \bid)^{-s}
\alpha\|_{L^\infty(M,\Lambda^{p,q})} \lesssim
\|\alpha\|_{L^2(M,\Lambda^{p,q})},\qquad (\alpha\in L^2(M,
\Lambda^{p,q})).\end{equation}
\end{corollary}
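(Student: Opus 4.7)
The plan is to chain the preceding corollary with the Sobolev embedding theorem and then globalize the resulting $L^\infty$ bound using the $G$-invariance of the setup. Since $\bar X = \bar M/G$ is compact, I first choose a relatively compact open set $U_0 \subset \bar M$ and a compact set $K_0 \subset U_0$ such that $G\cdot K_0 = \bar M$, together with a cutoff $\zeta \in C^\infty_c(\bar M)$ satisfying $\supp\zeta \subset U_0$ and $\zeta\equiv 1$ on $K_0$. The preceding corollary, applied to this fixed $\zeta$ and the given integer $s$, yields
\[
\|\zeta(\square+\bid)^{-s}\alpha\|_{H^s(M,\Lambda^{p,q})}\lesssim\|\alpha\|_{L^2(M,\Lambda^{p,q})}.
\]

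Next I bring in the classical Sobolev embedding. Since $\supp\zeta$ is compact and sits inside finitely many coordinate charts of real dimension $2\dim_{\mathbb C}M=2n$, and since by hypothesis $s>n$, the standard embedding $H^s\hookrightarrow L^\infty$ (applied componentwise to a form; its Hermitian fiber norm is controlled by the sum of the moduli of its coefficients in a local frame) upgrades the previous estimate to
\[
\|\zeta(\square+\bid)^{-s}\alpha\|_{L^\infty(M,\Lambda^{p,q})}\lesssim\|\alpha\|_{L^2(M,\Lambda^{p,q})}.
\]

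To turn this into a bound on all of $\bar M$, I exploit that the $G$-action $T_g$ on forms commutes with $\square$ (hence with $(\square+\bid)^{-s}$), preserves the $L^2$ norm, and, thanks to $G$-invariance of the Hermitian structure on $\Lambda^{p,q}$, preserves the pointwise fiber norm $|\cdot|_{\Lambda^{p,q}}$. Given $x\in \bar M$, I pick $g\in G$ with $g^{-1}x\in K_0$, so that $\zeta(g^{-1}x)=1$. Then
\begin{align*}
|((\square+\bid)^{-s}\alpha)(x)|
&= |((\square+\bid)^{-s}(T_{g^{-1}}\alpha))(g^{-1}x)| \\
&\le \|\zeta\cdot(\square+\bid)^{-s}(T_{g^{-1}}\alpha)\|_{L^\infty} \\
&\lesssim \|T_{g^{-1}}\alpha\|_{L^2}=\|\alpha\|_{L^2},
\end{align*}
where the penultimate step is the displayed embedding applied to $T_{g^{-1}}\alpha$ in place of $\alpha$. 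Taking the supremum over $x\in\bar M$ completes the proof.

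The main point that needs attention is that the constant produced in the Sobolev embedding step must not drift with $g$. This is automatic, because the embedding is invoked once for a fixed cutoff $\zeta$ on a fixed compact set, and all ingredients of the argument ($\square$, the Hermitian structure on $\Lambda^{p,q}$, the invariant measure) are $G$-invariant, so conjugating by $T_g$ merely relabels the region over which the local estimate is applied without altering any constant.
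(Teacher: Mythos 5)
Your proof is correct and follows essentially the same route as the paper's: apply the preceding corollary with a fixed cutoff that is $1$ on a compact set whose $G$-translates cover $\bar M$, use the Sobolev embedding $H^s\hookrightarrow L^\infty$ (valid since $s>\dim_{\mathbb C}M=\tfrac12\dim_{\mathbb R}M$), and globalize via $G$-invariance. The only difference is presentational: you spell out the equivariance step (conjugating by $T_g$ and using that $T_g$ commutes with $(\square+\bid)^{-s}$ and preserves the fiber and $L^2$ norms), which the paper compresses into the phrase ``by the $G$-invariance of $M$ and our choice of local geodesic coordinates.''
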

\begin{proof} Choose $B\subset\bar M$ compact and sufficiently large so that
$B\cdot G$ covers $\bar M$. This is possible since $\bar X$ is compact. Choose
$\zeta\in C^\infty_c(\bar M)$ such that $\supp \zeta\supset B$ in \eqref{fun}.
Now, the Sobolev lemma provides that if $s>k+m/2$, then $H^s(\mathbb R^m)\subset
C^k(\mathbb R^m)$ and there is a constant $C=C_{s,k}$ such that
\begin{equation}\label{sobolem}\sup_{|\alpha|\le k} \sup_{x\in\mathbb R^m}
|\partial^\alpha u(x)|\le C \|u\|_{H^s(\mathbb R^m)},\end{equation}
\noindent
thus, if we take $s>k + 1/2 \dim_{\mathbb R}M = k + \dim_{\mathbb C}M$, we have
\[\| (\square + \bid)^{-s} \alpha\|_{C^k(\bar M)}\lesssim \|\zeta (\square +
\bid)^{-s} \alpha\|_{H^s} \lesssim \|\alpha\|_{L^2},\qquad (\alpha\in L^2(M))\]
\noindent
by the $G$-invariance of $M$ and our choice of local geodesic
coordinates.\end{proof}

\begin{rem}Note that the
exact invariances furnished by the group action assumed here are not
essential and can be relaxed to assumptions on the uniformity of the estimates in \eqref{prima1}, {\it etc}.\end{rem}

\section{Heat kernel estimates and intrinsic geometry}

\begin{definition} Let $\square=\int_0^\infty \lambda dE_\lambda$ be the
spectral resolution of the Laplacian and for $t>0$ put
\[P_t = \int_0^\infty e^{-t\lambda}dE_\lambda.\]
\noindent
That is, $P_t = e^{-t\square}$, and we would write $P_t^{p,q} =
e^{-t\square_{p,q}}$ to be completely explicit.
\end{definition}
\begin{rem} The semigroup $(e^{-tH};t\ge 0)$ of a selfadjoint operator $H$
contains a wealth of information about its generator $H$ and satisfies the
semigroup property $e^{-(t+s)H}=e^{-tH}e^{-sH}$, see
\cite{Dav, Goldstein-85} for the general theory and \cite{Si} for the case of
Schr\"odinger operators. In the case at hand, where $H\ge 0$, the semigroup
consists of contractions, {\it i.e.}, $\| e^{-tH}\|_{2\to 2}\le 1$. The symbol $\|\cdot\|_{2\to 2}$ denotes the operator norm of an operator from $L^2$ to $L^2$. Similar to
what is known for the Laplacian, the semigroup of the $\dbar$-Neumann Laplacian
$\square$ is {\it ultracontractive}. That is, it maps $L^2$ into $L^\infty$ continuously. This is
equivalent to the validity of a Nash-type inequality and will be discussed below.\end{rem}

\subsection{Ultracontractivity and Nash inequalities}\label{ultranash}
The heat operator's ultracontractivity ({\it i.e.}\ boundedness from $L^2\to L^\infty$) follows immediately from the Sobolev estimate in Cor.\ \ref{corso} above. The proof is formally very similar to that from Davies
\cite{D}. The difference between the two cases is that our basic spaces consist of vector-valued functions and so certain concepts and manipulations are not available. For example, we cannot identify
nonnegative elements or take the absolute value in a naive way.

\begin{prop}\label{ultra} Let $M$ be a strongly pseudoconvex $G$-manifold on which $G$ acts freely by holomorphic transformations with compact quotient $\bar M/G$. For integer $s>\dim_{\mathbb C}M$ and $q>0$, we have
\begin{equation}\label{equltra}\| P_t \alpha\|_{L^\infty(M,\lpq)} \lesssim
\max(1,t^{-s})\|\alpha\|_{L^2(M,\lpq)} ,\quad (\alpha\in
L^2(M,\lpq)).\end{equation}
\end{prop}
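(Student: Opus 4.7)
The idea is to bypass the usual Nash/Davies truncation argument (which is awkward for vector-valued forms, as the remark preceding the proposition warns) and instead deduce ultracontractivity directly from Corollary \ref{corso} via a simple spectral factorization. The functional-calculus identity
\[
e^{-t\lambda} \;=\; (1+\lambda)^{-s}\cdot (1+\lambda)^{s} e^{-t\lambda}
\]
on $\sigma(\square)\subset [0,\infty)$ gives the operator factorization
\[
P_t \;=\; (\square+\bid)^{-s}\,\bigl[(\square+\bid)^{s} P_t\bigr],
\]
where both factors are bounded on $\lzpq$ (the inner factor because $(1+\lambda)^{s}e^{-t\lambda}$ is a bounded function of $\lambda\ge0$ for each fixed $t>0$, the outer because $(1+\lambda)^{-s}\le 1$).

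The first main step is to apply Corollary \ref{corso} to the outer factor: under the present hypotheses on $M$, $s$, and $q$, it gives $\|(\square+\bid)^{-s}\beta\|_{\lupq}\lesssim \|\beta\|_{\lzpq}$ for every $\beta\in\lzpq$. The second step is to estimate the inner factor in $L^2\to L^2$ operator norm via the spectral theorem:
\[
\bigl\|(\square+\bid)^{s} P_t\bigr\|_{2\to 2} \;=\; \sup_{\lambda\ge 0}\,(1+\lambda)^{s}e^{-t\lambda}.
\]
A one-variable calculus computation locates the critical point at $\lambda=s/t-1$ and shows this supremum equals $1$ when $t\ge s$ and equals $(s/t)^{s}e^{t-s}$ when $0<t<s$; in both regimes it is bounded by a constant times $\max(1,t^{-s})$. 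Composing the two bounds gives, for every $\alpha\in\lzpq$,
\[
\|P_t\alpha\|_{\lupq} \;\le\; \bigl\|(\square+\bid)^{-s}\bigr\|_{L^{2}\to L^{\infty}}\, \bigl\|(\square+\bid)^{s} P_t\alpha\bigr\|_{\lzpq} \;\lesssim\; \max(1,t^{-s})\,\|\alpha\|_{\lzpq},
\]
which is exactly \eqref{equltra}.

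I do not expect any substantive obstacle: everything reduces to Corollary \ref{corso} plus a supremum on the half-line. The only minor bookkeeping point is to verify that the composition is legitimate, i.e., that $(\square+\bid)^{s}P_t\alpha$ lies in $\lzpq$ (clear from the bounded spectral multiplier) and hence in the domain of the bounded operator $(\square+\bid)^{-s}$. No positivity, truncation, or identification of nonnegative representatives is needed, which is precisely why this approach works cleanly for forms.
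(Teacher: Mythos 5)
Your proof is correct and takes essentially the same route as the paper: plug $(\square+\bid)^{s}P_t\alpha$ into Corollary \ref{corso} and bound $\|(\square+\bid)^{s}P_t\|_{2\to2}$ by the supremum of $(1+\lambda)^{s}e^{-t\lambda}$ via functional calculus. The only cosmetic difference is that you compute that supremum exactly for all $t>0$, whereas the paper does so only for $0<t\le1$ and then invokes the $L^2$-contractivity of $P_t$ to cover $t>1$.
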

\begin{proof} We plug $(\square + \bid)^{s}P_t \alpha$ into the inequality
\eqref{coso} from Cor.\ \ref{corso} and obtain:
\begin{eqnarray*}
 \| P_t \alpha\|_{L^\infty} &=& \| (\square + \bid)^{-s} (\square + \bid)^{s}P_t
\alpha\|_{L^\infty} \\
&\lesssim&  \| (\square + \bid)^{s}P_t \alpha\|_{L^2}\\
&\lesssim& t^{-s} \|\alpha\|_{L^2}
\end{eqnarray*}
\noindent
for any $0<t\le 1$, by functional calculus, since the maximum of the function
$\lambda\mapsto(\lambda+1)^s e^{-\lambda t}$ goes like $t^{-s}$ for $t>0$. This
gives the result for arbitrary $t\ge 0$, as the semigroup is a contraction on
$L^2$.
\end{proof}

We mention here that the usual duality properties of the $L^p$ spaces hold in
our setting, \cite{GKS}.

\begin{corollary} \label{diag} Let $M$ be as in the previous proposition. Then, for
integer $s>\dim_{\mathbb C}M$ and $q>0$ we have
\begin{equation}\label{diagest}\| P_t \alpha\|_{L^\infty(M,\lpq)} \lesssim
\max(1,t^{-2s})\|\alpha\|_{L^1(M,\lpq)},\end{equation}
\noindent
uniformly for $\alpha\in L^1\cap L^2(M,\lpq)$.\end{corollary}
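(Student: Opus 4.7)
The plan is to derive the $L^1 \to L^\infty$ bound from the $L^2 \to L^\infty$ bound of Proposition \ref{ultra} by the standard duality-plus-semigroup-factorization argument.

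First I would invoke duality: since $\square$ is self-adjoint, each $P_t = e^{-t\square}$ is self-adjoint on $L^2(M,\lpq)$, and the duality between $L^1$ and $L^\infty$ for vector-valued forms is available by the remark following Proposition \ref{ultra} (citing \cite{GKS}). Taking adjoints in \eqref{equltra} therefore yields
\[
\|P_t\|_{L^1(M,\lpq)\to L^2(M,\lpq)} = \|P_t^*\|_{L^2\to L^\infty} = \|P_t\|_{L^2\to L^\infty}\lesssim \max(1,t^{-s}).
\]

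Next I would factor the semigroup using the identity $P_t = P_{t/2}\,P_{t/2}$, which follows from the semigroup property quoted in the earlier remark. Chaining the two bounds gives, for any $\alpha\in L^1\cap L^2(M,\lpq)$,
\[
\|P_t\alpha\|_{L^\infty}\le \|P_{t/2}\|_{L^2\to L^\infty}\,\|P_{t/2}\alpha\|_{L^2}\lesssim \max\bigl(1,(t/2)^{-s}\bigr)^2\,\|\alpha\|_{L^1}\lesssim \max(1,t^{-2s})\,\|\alpha\|_{L^1},
\]
which is exactly \eqref{diagest}.

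The only delicate point is the duality step, since our $L^p$-spaces consist of $\Lambda^{p,q}$-valued forms rather than scalar functions and one has to make sense of the dual pairing with the pointwise Hermitian structure; however, with the $G$-invariant metric fixed in Section~2.2 this is the standard identification reviewed in \cite{GKS}, so no new work is required here. Everything else is the textbook Nash/Varopoulos extrapolation, and the upshot is that the exponent in the $L^2\to L^\infty$ estimate simply doubles when passing to $L^1\to L^\infty$.
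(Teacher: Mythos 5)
Your proposal is correct and follows exactly the paper's own argument: duality gives $\|P_t\|_{1\to 2}=\|P_t\|_{2\to\infty}\lesssim\max(1,t^{-s})$, and the factorization $P_t=P_{t/2}P_{t/2}$ doubles the exponent. The care you take with the vector-valued duality matches the paper's appeal to \cite{GKS}, so nothing further is needed.
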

\begin{proof} Since $P_t$ is symmetric, $\| P_t\|_{2\to\infty}=\| P_t\|_{1\to
2}$ by duality, and
\[\| P_t\|_{2\to\infty}\lesssim \max(1,t^{-s}),\]
\noindent
from the previous statement, we have
\[\| P_t\|_{1\to \infty}\le \| P_t\|_{2\to \infty} \| P_t\|_{1\to 2} \le \|
P_{t/2}\|_{2\to\infty}^2 \lesssim\frac{1}{t^{2s}}\]
\noindent
by the semigroup property.\end{proof}

\begin{rem} The basic tool in the estimates to come is the fundamental
theorem of calculus applied to the function $t\mapsto \| P_t u\|_{L^2}^2$ or
variants thereof. This rests on the following immediate consequence of
functional calculus: For any $u\in\dom\square$:
$$
P_t u \in \dom\square \mbox{  and  }\frac{d}{dt}\left[P_t u\right] = -\square
P_t u .
$$
\end{rem}

\begin{prop}\label{derive} Let $M$ be as in the previous proposition. For any
real-valued function $w\in C^\infty(\bar
M)\cap L^\infty(M)$ for which $\langle\dbar w,\dbar w\rangle_{\Lambda^{0,1}}$ is
bounded in $M$ and $u\in\lzpq$,
\[\frac{d}{dt}\| e^w P_t u\|_{L^2(M,\Lambda^{p,q})}^2 = -2\mathfrak{Re}\, Q(P_t
u, e^{2w} P_t u).\]
In particular, for $w=0$ we get:
\[\frac{d}{dt}\|  P_t u\|_{L^2(M,\Lambda^{p,q})}^2 = -2 Q(P_t u).\]
\end{prop}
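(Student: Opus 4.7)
Fix $u\in\lzpq$ and set $v_t := P_t u$. For $t>0$ the spectral theorem gives $v_t\in\dom\square$ with $\tfrac{d}{dt}v_t = -\square v_t$ strongly in $L^2$, as recalled in the remark preceding the statement. The plan is to differentiate $\|e^w v_t\|^2 = \langle e^{2w} v_t, v_t\rangle$ by the product rule, then use the first form representation theorem to convert the resulting inner product into a value of $Q$.

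Since $w$ is real-valued and bounded, multiplication by $e^{2w}$ is a bounded self-adjoint operator on $\lzpq$, so the product rule gives
\[
\frac{d}{dt}\|e^w P_t u\|^2 = -\langle\square v_t, e^{2w} v_t\rangle - \langle e^{2w} v_t, \square v_t\rangle = -2\mathfrak{Re}\,\langle\square v_t, e^{2w} v_t\rangle.
\]
The representation theorem then identifies this expression with $-2\mathfrak{Re}\,Q(v_t, e^{2w} v_t)$, \emph{provided} $e^{2w} v_t \in \dom Q$.

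The crux is verifying this last membership, and here is where I expect the main obstacle. I would approximate $v_t$ in the form norm by $v_n\in\dpq$ (a core for $Q$ by Lemma \ref{dbar}(ii)) and show that $e^{2w}v_n$ is itself $Q$-Cauchy with $L^2$-limit $e^{2w}v_t$. Because $w\in C^\infty(\bar M)$ and $e^{2w}$ is a scalar function, $\sigma(\vartheta,d\rho)(e^{2w}v_n)|_{bM}= e^{2w}\sigma(\vartheta,d\rho)v_n|_{bM}=0$, so $e^{2w}v_n$ remains in $\dpq$. The Leibniz identity $\dbar(e^{2w}\phi)=e^{2w}\dbar\phi + 2e^{2w}(\dbar w)\wedge\phi$ and its analogue for $\vartheta$, together with the boundedness hypotheses on $e^{2w}$ and on $\langle\dbar w,\dbar w\rangle$, yield a pointwise estimate that integrates to
\[
Q(e^{2w}\psi)\le C\bigl(Q(\psi)+\|\psi\|_{L^2}^2\bigr).
\]
This promotes the $Q$-Cauchyness of $(v_n)$ to that of $(e^{2w}v_n)$, and closedness of $Q$ places the limit in $\dom Q$.

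The special case $w=0$ is then immediate: $e^{2w}=1$ and $Q(v_t,v_t)\ge 0$ is automatically real, producing $\tfrac{d}{dt}\|P_tu\|^2 = -2Q(P_tu)$. The only delicate point in the whole argument is the verification $e^{2w}v_t\in\dom Q$: this is precisely what the boundedness of $\langle\dbar w,\dbar w\rangle$ buys, by making the commutators of multiplication by $e^{2w}$ with $\dbar$ and $\vartheta$ into bounded zeroth-order operators that preserve both the boundary symbol condition and the $L^2$ structure.
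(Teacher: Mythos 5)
Your argument is correct and follows essentially the same path as the paper: differentiate $\|e^w P_tu\|^2$ (the paper does so via an explicit difference quotient, you via the product rule for a bounded self-adjoint multiplier, which is the same thing), and then invoke the first representation theorem to rewrite $\langle\square P_tu,\,e^{2w}P_tu\rangle$ as $Q(P_tu,\,e^{2w}P_tu)$, which requires $e^{2w}P_tu\in\dom Q$. Where you diverge slightly is at that last step: the paper simply cites Lemma~\ref{dbar}(iii) for the needed membership, whereas you reconstruct the fact from scratch by approximating $P_tu$ in form norm by elements of the core $\dpq$, noting that scalar multiplication by $e^{2w}$ preserves the boundary symbol condition, and using the Leibniz rules for $\dbar$ and $\vartheta$ plus the boundedness of $e^{2w}$ and $\langle\dbar w,\dbar w\rangle$ to obtain $Q(e^{2w}\psi)\lesssim Q(\psi)+\|\psi\|_{L^2}^2$ and hence propagate $Q$-Cauchyness. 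This is arguably a more careful justification than the paper's, since Lemma~\ref{dbar}(iii) as stated concerns multiplication by cutoff functions, whereas $e^{2w}$ is in general neither compactly supported nor bounded by $1$; what is really being used is precisely the stability estimate you derive, so your filling-in is a genuine improvement in rigor even though the overall strategy is identical.
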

\begin{proof} For any $t>0$ we have
 \begin{eqnarray*}
 \frac{d}{dt}\| e^w P_t u\|_{L^2}^2 &=&  \lim_{h\to 0}\frac{1}{h}\left[ \langle
P_{t+h}u,e^{2w}  P_{t+h}u\rangle
-\langle P_{t}u,e^{2w}  P_{t}u\rangle\right]\\
&=& \lim_{h\to 0}\left[ \langle \frac{1}{h}( P_{t+h}u-P_t u) ,e^{2w}
P_{t+h}u\rangle
+ \langle e^{2w}  P_{t}u, \frac{1}{h}( P_{t+h}u-P_t u)\rangle\right]\\
&=& \langle -\square P_{t}u,e^{2w}  P_{t}u\rangle+ \langle e^{2w}
P_{t}u,-\square P_{t}u\rangle\\
&=& -Q(P_{t}u,e^{2w}  P_{t}u)-Q(e^{2w}P_{t}u,  P_{t}u) ,
 \end{eqnarray*}
where, in the last step we used that $e^{2w}u$ is in the domain of $Q$, by part
$(iii)$ of Lemma \ref{dbar}.
\end{proof}
%
\begin{proof}[\textbf{Proof of Theorem \ref{thm1.1}}]From Prop.\
\ref{ultra} and duality we get
\[t^{-2s}\| u\|_{L^1}^2\ge \langle P_t u,P_t u\rangle_{L^2}=\| P_t u\|_{L^2}^2.\]
\noindent
We use the fundamental theorem of calculus and the above Prop.\ \ref{derive} in
\begin{eqnarray}
 \ldots &=& \| u\|_{L^2}^2- 2\int_0^tQ(P_s u)ds\notag \\
\label{star} &\ge& \| u\|_{L^2}^2- 2tQ(u)
\end{eqnarray}
\noindent
where, in the last inequality we use the following straightforward consequence
of functional calculus:
\[Q(P_s u)=\| \square^\frac12 e^{-s\square}u\|_{L^2}^2\le \| \square^\frac12
u\|_{L^2}^2.\]
Putting $t=Q(u)^{-\frac{1}{2s+1}}\| u\|_{L^1(M,\lpq)}^{\frac{2}{2s+1}}$ in
\eqref{star} gives the assertion.
\end{proof}

\subsection{The intrinsic metric}\label{intrin}
We will measure the bounds on off-diagonal terms in the heat kernel with respect
to the metric given by
\begin{definition}\label{metric} We define the $G$-invariant pseudo-metric $d_\square$ on
$M$ by
\[ \dint (x,y) = \sup \{ w(y)- w(x) \mid w\in L^\infty\cap C^\infty(\bar
M,\mathbb R),\langle\dbar w,\dbar w\rangle_{\Lambda^{0,1}}\le 1\}.\]
\noindent
The distance between sets is given by
\[\dint (A;B):= \sup \{\inf_B w -\sup_Aw \mid w\in L^\infty\cap C^\infty(\bar
M,\mathbb R),\langle\dbar w,\dbar w\rangle_{\Lambda^{0,1}}\le 1\}\]
\noindent
for arbitrary $A,B\subset {\bar M}$. \end{definition}
The definition above is geared to the intrinsic metric of Dirichlet forms, as
used in slightly different versions, {\it e.g.}\ in \cite{BiroliM-95, DER, ERSZ,
Sturm-94b,Sturm-95b, Sto} as well as the metrics considered in
\cite{FeffermanSC-86, JerisonSC-86, NagelSW-85} and see
\cite{Gromov-81,Gromov-07} as well. Note however, that our
application of this concept is somewhat nonstandard. We use
this metric, defined on functions, to estimate the heat kernels acting on forms!
We now show that the metric above is equivalent to an associated Riemannian distance. To
this end, let us describe the metric structure of $M$ in more detail, in the notation
of Sect.\ \ref{geometry} above.

On the tangent bundle $TM$ of the $2n$-dimensional real $G$-manifold underlying $M$, we have a
$G$-invariant almost-complex structure $J: TM\to TM$, induced by the complex structure on $M$. Assume that
we also have a $G$-invariant Riemannian metric $g$ on $TM$ so that $J$ is an isometry with respect to $g$; $g(X,Y) = g(JX,
JY)$. Note that with respect to any such metric, $X\perp JX$. Indeed,
\[g(X,JX) = g(JX,-X) = - g(JX,X) = - g(X,JX) = 0.\]
\noindent
We may extend any
Riemannian structure for which $J$ is an isometry by complex sesquilinearity
(linear in the first slot, conjugate-linear in the second slot) to obtain
Hermitian inner products which we say are {\it associated to} $g$ in $T_{1,0}, T_{0,1}\subset TM\otimes_{\mathbb R}\mathbb C$:
\[\langle X- iJX, Y - iJY\rangle_{T_{1,0}} :=  g(X,Y) + i g(X,JY),\]
\[\langle X+ iJX, Y+ iJY\rangle_{T_{0,1}} :=  g(X,Y) + i g(JX,Y).\]
\noindent
By duality, these structures extend naturally to $\Lambda^{1,0}$ and $\Lambda^{0,1}$ and by tensoriality to
each of the spaces $\Lambda^{p,q}$. We will also metrize the bundle of complex $k$-forms as an
orthogonal sum
\begin{equation}\label{kforms}\Lambda^k =
\bigoplus_{p+q=k}\Lambda^{p,q}, \qquad (k=0,1,\dots, n).\end{equation}
\noindent
Let us describe the $(0,1)$-forms in terms of $J$ analogously to our vector
fields in \eqref{T10}, \eqref{T01}. Since $\Lambda^{0,1}$ is the dual of $T_{0,1}$ in the Hermitian metric
above, we have $\xi_X\in\Lambda^{0,1}$, the dual of $X+iJX\in T_{0,1}$,
naturally of the form
\begin{align}\label{dual}\xi_X(Y+iJY) =& \langle Y+ iJY, X+
iJX\rangle_{T_{0,1}}\\
 =&  g(Y,X) + i g(JY,X)  = g(X,Y) - i g(JX,Y) .\notag\end{align}
We compute the last term in coordinates. Since by assumption we have $g(X,Y) =
g(JX,JY)$, it is true that
\[ g_{kl} J^k_i J^l_j = g_{ij},\]
\noindent
with the convention that repeated indices be summed over. Multiplying this
identity by $J$ and using $J^l_j  J^j_k = - \delta^l_k$, the Kronecker $\delta$,
we get
\[g_{kj}J^j_i = -g_{ij} J^j_k,\]
\noindent
from which it follows that $g(JX,\cdot) = - Jg(X,\cdot)$ since
\[g(JX,\cdot) = g_{ij} J^j_k X^k dx^i \quad {\rm and} \quad Jg(X,\cdot) = J^j_i
g_{jk} X^k dx^i.\]
\noindent
Going back to \eqref{dual}  and writing $Jg(X,\cdot) |_{Y}$ too simply
``$Jg(X,Y)$,'' we see that
\[\xi_X(Y+iJY) =  g(X,Y) + i Jg(X,Y)\]
\noindent
thus $\Lambda^{0,1}\ni\xi_X = \phi_X + iJ\phi_X$ for the real 1-form
$\phi_X = g(X,\cdot)$. Similarly, a form $\Lambda^{1,0}\ni\xi_X = \phi_X -
iJ\phi_X$ again for the real 1-form $\phi_X = g(X,\cdot)$

Now we return to the description of the intrinsic metric. For $w\in
C^\infty(\bar M, \mathbb R)$, consider the following computation:
\[\langle dw, dw\rangle_{\Lambda^1} = \langle(\bar\partial + \partial)w,
(\bar\partial + \partial)w\rangle_{\Lambda^1}  = \langle\bar\partial w,
\bar\partial w\rangle_{\Lambda^{0,1}} + \langle\partial w, \partial
w\rangle_{\Lambda^{1,0}}\]
\noindent
since $\bar\partial w\in\Lambda^{0,1}$ and $\partial w\in\Lambda^{1,0}$ are
orthogonal by the decomposition \eqref{kforms}.

Now, $w$ is real so $\bar\partial w$ is the complex conjugate of $\partial w$ by \eqref{bar},
thus there is a {\it single} real 1-form $\phi$ such that $\bar\partial w =
\phi + iJ\phi$ and $\partial w = \phi - iJ\phi$. In fact, $\phi=\frac12 dw$ since $d=\partial + \dbar$.
Computing the inner products,
\[\langle\bar\partial w, \bar\partial w\rangle_{\Lambda^{0,1}}  =
\langle\partial w, \partial w\rangle_{\Lambda^{1,0}} =2 g(\phi,\phi)\]
\noindent
since $g(\phi, J\phi) = 0$. Thus $\langle dw, dw\rangle_{\Lambda^1} =
2\langle\bar\partial w, \bar\partial w\rangle_{\Lambda^{0,1}} =
4g(\phi,\phi)$ in our metric.

Since the Laplace-Beltrami operator on functions is induced by the quadratic form $w\mapsto \int\langle dw, dw\rangle_{\Lambda^1}$, {\it cf.}\ \cite{Saloff-Coste-92,Sturm-95b}, we have shown

\begin{prop}\label{equiv} For a $J$-invariant Riemannian structure $g$, let $\Delta_{LB}$ be
the corresponding Laplace-Beltrami operator. Given the Hermitian structure on
$\Lambda^{0,1}$ associated to $g$, the intrinsic metric $\dint$ is
equivalent to the one induced by the intrinsic metric of $-\Delta_{LB}$ on functions.
\end{prop}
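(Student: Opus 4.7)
The plan is to observe that both intrinsic metrics are defined as suprema over classes of real-valued test functions differing only by a fixed scalar rescaling, and conclude the equivalence directly from this.

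First, I would recall that the intrinsic metric associated with $-\Delta_{LB}$ in the sense of Dirichlet forms is
\[d_{LB}(x,y) = \sup\{w(y) - w(x) : w \in L^\infty \cap C^\infty(\bar M, \mathbb R),\ \langle dw, dw\rangle_{\Lambda^1} \le 1\},\]
since $-\Delta_{LB}$ is the selfadjoint operator associated with the Dirichlet form $w\mapsto \int_M \langle dw,dw\rangle_{\Lambda^1}$; this is exactly the formulation used in the references cited in the statement.

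Next, I would invoke the pointwise identity
\[\langle dw, dw\rangle_{\Lambda^1} = 2\,\langle \dbar w, \dbar w\rangle_{\Lambda^{0,1}}\]
for real-valued $w$, already established in the computation immediately preceding the statement. This identity converts the comparison of the two admissibility conditions into a trivial rescaling: a real $w$ satisfies $\langle \dbar w, \dbar w\rangle_{\Lambda^{0,1}} \le 1$ if and only if $\tilde w := w/\sqrt{2}$ satisfies $\langle d\tilde w, d\tilde w\rangle_{\Lambda^1} \le 1$. Since this scaling is a bijection of the admissible class $L^\infty \cap C^\infty(\bar M, \mathbb R)$ onto itself, substituting $w = \sqrt{2}\,\tilde w$ in Definition \ref{metric} and pulling the constant out of the supremum yields $\dint(x,y) = \sqrt{2}\, d_{LB}(x,y)$, which is in fact stronger than the claimed equivalence. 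The identical substitution handles the set-to-set distance $\dint(A;B)$.

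No real obstacle arises here; the only things to check are that the admissible class is invariant under positive scalar multiplication (immediate) and that the pointwise identity uses only the reality of $w$ together with the compatibility \eqref{bar} of the Hermitian structures on $\Lambda^{0,1}$ and $\Lambda^{1,0}$, both of which are in place. The proof is genuinely a short bookkeeping argument resting on the geometric identity above.
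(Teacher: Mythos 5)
Your argument is correct and is essentially the paper's own proof: the text immediately preceding the proposition establishes the pointwise identity $\langle dw,dw\rangle_{\Lambda^1}=2\langle\dbar w,\dbar w\rangle_{\Lambda^{0,1}}$ for real $w$ and then concludes the equivalence of the two intrinsic metrics exactly as you do, via the definition of $-\Delta_{LB}$ through the form $w\mapsto\int_M\langle dw,dw\rangle_{\Lambda^1}$. Your explicit rescaling giving $\dint=\sqrt{2}\,d_{LB}$ matches the factor of $\sqrt{2}$ noted in the remark following the proposition.
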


\begin{rem} (1) At least in the case of complete manifolds without boundary it
is well-known, {\it cf.}\ \cite{Sturm-95b} that the intrinsic metric $d_{LB}$ of the
Laplace-Beltrami operator coincides with the Riemannian distance, i.e.,
$$
 d_{LB}(x,y)=\inf \{ L(\gamma)\mid \gamma:I\to M \mbox{ a curve joining }x,y\in
M\} .
$$
In view of \cite{Sto}, the presence of a boundary should not change this
picture.

(2) For K\"ahler manifolds, $\square=\frac12\Delta$, {\it cf.}\ \cite[Chap.\ III, \S 2]{Kob}, acting componentwise on forms, therefore it is clear in this case that we
recover the intrinsic metric of the Laplacian up to a factor of $\sqrt{2}$.
\end{rem}

\subsection{Off-diagonal heat kernel estimates} Here, we basically use the proof
from \cite{ERSZ}, pointing out once more that
our setup is substantially different as our spaces are spaces of differential forms rather
than functions. Let us also remind the reader that multiplication by functions preserves the
domain of $Q$ and this is crucial to our treatment.
\begin{lemma}\label{twist} For $w\in L^\infty\cap C^1(\bar M,\mathbb R)$, we have
\begin{align} Q(u,u)=  Q(e^{-\epsilon w}u,e^{\epsilon w}u)  - & 2i\epsilon\,
\mathfrak{Im}\, \left\{\langle\dbar u,\dbar w\wedge u\rangle_{L^2} +
\langle\star(\partial w \wedge\star u),\dbar^*u\rangle_{L^2}\right\} \notag \\
& +\epsilon^2 \left\{ \|\dbar w \wedge u\|_{L^2}^2 + \|\partial w \wedge\star
u\|_{L^2}^2\right\} \notag\end{align}
\noindent
for all $u\in\dom Q$. \end{lemma}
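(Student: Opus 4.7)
The plan is to expand $Q(e^{-\epsilon w}u, e^{\epsilon w}u)$ using Leibniz-type product rules for $\dbar$ and $\dbar^*$, observe that the prefactors $e^{-\epsilon w}$ and $e^{\epsilon w}$ multiply to $1$, and then collect the resulting terms. First one must check that both $e^{\epsilon w}u$ and $e^{-\epsilon w}u$ lie in $\dom Q$. This is where Lemma \ref{dbar}(iii) enters: although that statement is phrased for cutoff functions, the proof (which rests on the Leibniz rule and on the fact that the symbol condition $\sigma(\vartheta,d\rho)u|_{bM}=0$ is preserved by multiplication by any smooth function) goes through for any bounded real-valued $w\in C^\infty(\bar M)$ with bounded $\dbar w$; I would cite this extension and proceed.

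Next, I would apply the Leibniz identity
\[
\dbar(e^{\pm\epsilon w}u) \;=\; e^{\pm\epsilon w}\bigl(\dbar u \pm \epsilon\, \dbar w \wedge u\bigr)
\]
and, correspondingly,
\[
\dbar^*(e^{\pm\epsilon w}u) \;=\; e^{\pm\epsilon w}\bigl(\dbar^* u \mp \epsilon\, \star(\partial w \wedge \star u)\bigr),
\]
which follows from $\vartheta = -\star\partial\star$ and $\partial e^{\pm\epsilon w}=\pm\epsilon e^{\pm\epsilon w}\partial w$ on smooth forms in $\dpq$, and then extends to all of $\dom\dbar^*$ by the density of $\dpq$ asserted in Lemma \ref{dbar}(i)--(ii) together with the closedness of $\dbar^*$. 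Inserting these into $\langle \dbar(e^{-\epsilon w}u),\dbar(e^{\epsilon w}u)\rangle$ and $\langle \dbar^*(e^{-\epsilon w}u),\dbar^*(e^{\epsilon w}u)\rangle$, the exponential prefactors telescope to $1$ and one is left with two inner products of binomials $\langle A-\epsilon B,\,A+\epsilon B\rangle$.

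Expanding gives, for the $\dbar$-piece,
\[
\|\dbar u\|^2 \;+\; \epsilon\bigl(\langle\dbar u,\dbar w\wedge u\rangle - \langle\dbar w\wedge u,\dbar u\rangle\bigr) \;-\; \epsilon^2\|\dbar w\wedge u\|^2,
\]
and analogously for the $\dbar^*$-piece with $\dbar w\wedge u$ replaced by $\star(\partial w\wedge\star u)$ but with the opposite overall sign on the cross-term pair. The crucial step is then to recognize that a difference of the form $\langle \alpha,\beta\rangle-\langle\beta,\alpha\rangle$ equals $2i\,\mathfrak{Im}\,\langle\alpha,\beta\rangle$, which produces the two imaginary terms in the stated identity. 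Summing the two pieces and solving for $Q(u,u)=\|\dbar u\|^2+\|\dbar^*u\|^2$ yields the formula.

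The main obstacle I anticipate is the rigorous justification of the $\dbar^*$ product rule for $u\in\dom\dbar^*$ that is not a priori smooth up to the boundary: one cannot just apply $\vartheta=-\star\partial\star$ pointwise since $\star u$ need not be in a classical sense differentiable, and one must also control the boundary trace so that the free-boundary condition $\sigma(\vartheta,d\rho)(e^{\pm\epsilon w}u)|_{bM}=0$ is preserved (which it is, because multiplication by the smooth real scalar $e^{\pm\epsilon w}$ commutes with the symbol condition). I would handle both issues simultaneously by taking a sequence $u_k\in\dpq$ converging to $u$ in the graph norm of $\dbar^*$ (Lemma \ref{dbar}(i)--(ii)), verifying the identity on each $u_k$ by direct pointwise computation in the interior together with the boundary condition, and passing to the limit using the $L^\infty$-bound on $e^{\pm\epsilon w}$ and the $L^\infty$-bound on $\partial w$ (equivalently on $\langle\dbar w,\dbar w\rangle_{\Lambda^{0,1}}$) to control the extra terms.
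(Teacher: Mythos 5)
Your proposal matches the paper's proof in substance: expand $Q(e^{-\epsilon w}u,e^{\epsilon w}u)$, apply the Leibniz rules for $\dbar$ and for $\dbar^*=-\star\partial\star$, let the real exponentials cancel, and use $\langle A,B\rangle-\langle B,A\rangle=2i\,\mathfrak{Im}\langle A,B\rangle$ together with the isometry of $\star$. The only addition is your explicit density/closure argument for extending the $\dbar^*$ product rule from $\dpq$ to $\dom\dbar^*$, which the paper leaves implicit but which is correct and consistent with Lemma \ref{dbar}.
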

\begin{proof} By definition,
\begin{align}Q(e^{-\epsilon w}u,e^{\epsilon w}u) &= \langle\dbar e^{-\epsilon w}
u,\dbar e^{\epsilon w}u\rangle + \langle\dbar^*e^{-\epsilon w}u,\dbar^*
e^{\epsilon w}u\rangle.\notag\end{align}
\noindent
The first term simplifies as follows
\[ \langle\dbar e^{-\epsilon w} u,\dbar e^{\epsilon w}u\rangle =  \langle \dbar
u, \dbar u\rangle + 2 i\epsilon\, \mathfrak{Im}\langle\dbar u, \dbar w\wedge
u\rangle -\epsilon^2 \langle\dbar w\wedge u, \dbar w\wedge u\rangle.\]
\noindent
For the second term, note that $\dbar^* = - \star\partial\star$ where $\star$ is
the Hodge operator and $\partial = d-\dbar$, ({\it cf}.\ Prop.\ 5.1.1,
\cite{FK}). Thus
\[\dbar^* e^{-w}u = - \star\partial\star(e^{-w}u) = -\star [\partial
e^{-w}(\star u)] = -\star [\partial e^{-w}\wedge\star u + e^{-w}\partial\star
u]\]
\[ = -\star[\partial e^{-w}\wedge\star u] + e^{-w}\dbar^* u =
e^{-w}\star[\partial w\wedge\star u] + e^{-w}\dbar^* u.\]
\noindent
With the corresponding expression
\[\dbar^* e^w u =  -e^w\star[\partial w\wedge\star u] + e^w\dbar^* u,\]
\noindent
we obtain
\begin{align} \langle\dbar^*e^{-w}u,\dbar^* e^w u\rangle =&  \langle\dbar^* u,
\dbar^* u\rangle + 2i\, \mathfrak{Im}\, \langle\star(\partial w\wedge\star
u),\dbar^* u\rangle  \notag \\ &- \langle (\partial w\wedge\star u),(\partial
w\wedge\star u) \rangle\notag, \end{align}
\noindent
where we have used the fact that the Hodge $\star$ is an isometry. \end{proof}
\begin{corollary}\label{cormin} Assuming $\langle\dbar w,\dbar w\rangle_{\Lambda^{0,1}}\le
1$, we have
\[-\mathfrak{Re}\,Q(e^{-w}u,e^w u)\le 2 \|u\|_{L^2(M,\Lambda^{p,q})}^2.\]
\end{corollary}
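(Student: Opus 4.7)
The plan is to apply Lemma \ref{twist} with $\epsilon=1$, take real parts to discard the imaginary cross-term, and then control the remaining wedge-product expressions pointwise in terms of $\langle\dbar w,\dbar w\rangle_{\Lambda^{0,1}}$.

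First I would set $\epsilon=1$ in the identity of Lemma \ref{twist} and take the real part of both sides. Since $Q(u,u)=\|\dbar u\|^2+\|\dbar^*u\|^2$ is real and nonnegative, and the middle term $-2i\,\mathfrak{Im}\{\cdots\}$ is purely imaginary, this collapses to
\begin{equation*}
Q(u,u) \;=\; \mathfrak{Re}\,Q(e^{-w}u,e^{w}u) \;+\; \|\dbar w\wedge u\|_{L^2}^2 \;+\; \|\partial w\wedge\star u\|_{L^2}^2 .
\end{equation*}
Rearranging and using $Q(u,u)\ge 0$ yields
\begin{equation*}
-\mathfrak{Re}\,Q(e^{-w}u,e^{w}u) \;\le\; \|\dbar w\wedge u\|_{L^2}^2 + \|\partial w\wedge\star u\|_{L^2}^2 .
\end{equation*}

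Next, I would control each of the two wedge-product norms pointwise. Exterior multiplication by a $(0,1)$-form $\eta$ is a bounded operator on $\Lambda^{p,q}$ with pointwise operator norm at most $|\eta|$ in the Hermitian structure, so $|\dbar w\wedge u|^2\le \langle\dbar w,\dbar w\rangle_{\Lambda^{0,1}}\,|u|^2 \le |u|^2$ by hypothesis. Integrating gives $\|\dbar w\wedge u\|_{L^2}^2\le \|u\|_{L^2}^2$. For the second term, the Hodge $\star$ is a pointwise isometry, so $\|\partial w\wedge\star u\|_{L^2}^2\le \int_M |\partial w|^2|u|^2$; and because $w$ is real, the splitting \eqref{bar} gives $\partial w=\overline{\dbar w}$, so $|\partial w|=|\dbar w|\le 1$ pointwise as well. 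Hence $\|\partial w\wedge\star u\|_{L^2}^2\le \|u\|_{L^2}^2$. Summing the two bounds delivers the claimed inequality.

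I do not anticipate a serious obstacle: the one subtlety is being sure that setting $\epsilon=1$ is legal, which it is because the identity in Lemma \ref{twist} holds for all $u\in\dom Q$ and all real $w\in L^\infty\cap C^1(\bar M)$ (multiplication by $e^{\pm w}$ preserves $\dom Q$ by Lemma \ref{dbar}(iii)), and that the pointwise bound $|\eta\wedge u|\le |\eta||u|$ is the usual tensorial estimate for exterior multiplication in a Hermitian bundle.
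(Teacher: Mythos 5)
Your proposal is correct and follows the paper's own argument: take real parts in Lemma \ref{twist}, drop $-Q(u,u)\le 0$, and bound the two wedge-product norms by $\|u\|_{L^2}^2$ each via the pointwise Cauchy--Schwarz bound for exterior multiplication, the identity $|\partial w|=|\dbar w|\le 1$ for real $w$, and the isometry property of $\star$. No substantive differences.
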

\begin{proof} The previous assertion implies
\[-\mathfrak{Re}\,Q(e^{-\epsilon w}u,e^{\epsilon w} u) = \epsilon^2 \left\{
\|\dbar w \wedge u\|^2 + \|\partial w \wedge\star u\|^2\right\} - Q(u,u)\]
\noindent
and since we have assumed $\langle\partial w,\partial
w\rangle_{\Lambda^{1,0}} = \langle\dbar w,\dbar
w\rangle_{\Lambda^{0,1}}\le 1$, (see Sect.\ \ref{intrin}) we have the
result by Cauchy-Schwarz and again the fact that the Hodge $\star$ is an
isometry.  \end{proof}
\begin{proof}[\bf{Proof of Theorem \ref{thm1.2}}] For arbitrary $f\in\dom
Q$, the computation in Prop.\ \ref{derive}
 gives
\begin{align}\|e^w P_t f\|_{L^2}^2 - \|e^w f\|_{L^2}^2 =& \int_0^t
\frac{d}{ds}\|e^w P_s f\|_{L^2}^2  ds \notag\\
=& -2\mathfrak{Re}\, \int_0^t ds\ Q (P_s f, e^{2w} P_s f).\label{int}\end{align}
Writing
\[ Q (P_s f, e^{2w} P_s f) = Q (e^{-w} e^w P_s f, e^w e^w P_s f)\]
\noindent
and applying Cor.\ \ref{cormin}, the integrand in \eqref{int} satisfies
\begin{equation}\label{corapp}-\mathfrak{Re}\, Q (P_s f, e^{2w} P_s f) \le \|e^w
P_s f\|_{L^2}^2,\end{equation}
\noindent
as usual, assuming that $\langle\dbar w,\dbar w\rangle_{\Lambda^{0,1}}\le 1$. It
follows that
\[ \|e^w P_t f\|_{L^2}^2 - \|e^w f\|_{L^2}^2 \le 2\int_0^t ds\ \|e^w P_s
f\|_{L^2}^2. \]
\noindent
Gronwall's inequality implies that
\[\|e^w P_t f\|_{L^2}^2 \le e^{2t} \|e^w f\|_{L^2}^2\]
and replacing $w$ by $\delta w$ we obtain $\|e^{\delta w} P_t f\|_{L^2} \le
e^{\delta^2 t} \|e^{\delta w} f\|_{L^2}$ by inspection in \eqref{corapp}. This
implies that
\[\|e^{\delta w} P_t e^{-\delta w}\|_{2\to 2} \le e^{\delta^2 t}\]
\noindent
since $f$ was arbitrary in the domain.

Now, for arbitrary $\alpha,\beta\in L^2$
\begin{align}|\langle \bid_BP_t\bid_A\alpha, \beta\rangle| &= \left|\langle
e^{\delta
w} P_t e^{-\delta w} e^{\delta w}\bid_A\alpha, e^{-\delta
w}\bid_B\beta\rangle\right|
\notag\\
&\le \|e^{\delta w} P_t e^{-\delta w} e^{\delta w}\bid_A\alpha\|_{L^2(M)}
\|e^{-\delta w}\bid_B\beta\|_{L^2(M)} \notag\\
&\le \|e^{\delta w} P_t e^{-\delta w}\|_{2\to 2} \|e^{\delta
w}\bid_A\alpha\|_{L^2(M)} \|e^{-\delta w}\bid_B\beta\|_{L^2(M)}. \notag\\
&\le e^{\delta^2 t} \|e^{\delta w}\bid_A\alpha\|_{L^2(M)} \|e^{-\delta
w}\bid_B\beta\|_{L^2(M)}. \notag\end{align}
For $\varepsilon>0$ choose a weight function $w$ like in the definition of
$\dint(A;B)$ above, with $\langle \dbar w,\dbar w\rangle_{\Lambda^{0,1}} \le
1$ and so that
$$
\dint(A;B)-\varepsilon \le \inf_B w-\sup_A w \quad \mbox{and} \quad \sup_A
w=0
$$
(we can achieve the latter by adding a suitable constant). This gives
$$
\inf_B w \ge \dint(A;B)-\varepsilon .
$$
Inserting gives
$$
|\langle \bid_BP_t\bid_A\alpha, \beta\rangle|\le
e^{\delta^2t}e^{-\delta(\dint(A;B)-\varepsilon)}\|\alpha\|\|\beta\|
$$
so that (since $\varepsilon$ is arbitrary)
$$
\| \bid_BP_t\bid_A\| \le
e^{\delta^2t}e^{-\delta \dint(A;B)} .
$$
\noindent
For $\dint(A;B)<\infty$, choose $\delta = \dint(A;B)/(2t)$.
\end{proof}
\begin{rem} In light of Prop.\ \ref{equiv}, we may replace $\dint$ with $d_{LB}$, making the necessary changes.\end{rem}

\subsection{Sobolev estimates for the Heat Operator}

Here we extend some $L^p$ results from the preceding treatment to Sobolev
spaces. First note that for $t>0$ and $k\in\mathbb N$ arbitrary, we have
$P_t:L^2 \to \dom\square^k$.
\begin{prop}\label{likespecs} For $t>0$ and $q>0$ we have
$$P_t:L^2(M,\Lambda^{p,q})\to
C^\infty(\bar M,\Lambda^{p,q}).$$
\end{prop}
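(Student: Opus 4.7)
The plan is a standard bootstrap that combines the pseudolocal Sobolev estimate of the preceding subsection with the smoothing action of the semigroup supplied by functional calculus.

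First, by functional calculus, for every $t>0$ and every $s\in\mathbb N$ the element $P_t\alpha$ lies in the domain of every power of $\square$; more precisely, since the function $\lambda\mapsto(\lambda+1)^s e^{-t\lambda}$ is bounded on $[0,\infty)$, one has
\[
\|(\square+\bid)^s P_t\alpha\|_{L^2(M,\Lambda^{p,q})}\le C(s,t)\,\|\alpha\|_{L^2(M,\Lambda^{p,q})}
\]
with $C(s,t)=\sup_{\lambda\ge 0}(\lambda+1)^s e^{-t\lambda}$. Moreover $P_t\alpha=(\square+\bid)^{-s}(\square+\bid)^s P_t\alpha$.

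Next, apply the corollary to Theorem \ref{globalize}, namely
\[
\|\zeta(\square+\bid)^{-s}\beta\|_{H^s(M,\Lambda^{p,q})}\lesssim \|\beta\|_{L^2(M,\Lambda^{p,q})},
\]
valid for any $\zeta\in C^\infty_c(\bar M)$ and $\beta\in L^2$, to the choice $\beta=(\square+\bid)^s P_t\alpha$. Combining this with the functional-calculus bound of the previous step yields
\[
\|\zeta P_t\alpha\|_{H^s(M,\Lambda^{p,q})}\lesssim C(s,t)\,\|\alpha\|_{L^2(M,\Lambda^{p,q})},
\]
for every $s\in\mathbb N$ and every $\zeta\in C^\infty_c(\bar M)$.

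Finally, the Sobolev embedding \eqref{sobolem} used in the proof of Cor.\ \ref{corso} shows that if $s>k+\dim_{\mathbb C}M$ then $\zeta P_t\alpha\in C^k(\bar M,\Lambda^{p,q})$. Since $\zeta\in C_c^\infty(\bar M)$ is arbitrary and $k$ is arbitrary, we conclude $P_t\alpha\in C^\infty(\bar M,\Lambda^{p,q})$: given any point $x\in\bar M$, pick $\zeta\in C^\infty_c(\bar M)$ equal to $1$ on a neighborhood of $x$, so that $P_t\alpha$ agrees with the smooth section $\zeta P_t\alpha$ near $x$.

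There is no real obstacle here; the only point to be careful about is that the pseudolocal estimate is genuinely local (the cutoff $\zeta$ has compact support in $\bar M$), so to reach the global statement $P_t\alpha\in C^\infty(\bar M,\Lambda^{p,q})$ one must argue pointwise as above rather than hope for a single cutoff-free estimate. The $G$-action plays no role in this step beyond what was already used to set up the underlying Sobolev machinery.
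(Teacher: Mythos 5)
Your proof is correct, and it rests on the same three ingredients as the paper's: the pseudolocal estimate, the functional-calculus bound $\sup_{\lambda\ge 0}(\lambda+1)^s e^{-t\lambda}<\infty$, and Sobolev embedding up to the boundary. The packaging is slightly different — you factor $P_t\alpha=(\square+\bid)^{-s}(\square+\bid)^s P_t\alpha$ and invoke the already-established Corollary \eqref{fun} (exactly mirroring the proof of Proposition \ref{ultra}), whereas the paper instead re-runs the induction on $s$ directly, applying Theorem \ref{globalize} at each stage with $\beta\in\dom\square$ and using the commutativity of $P_t$ with $\square$ to propagate the local regularity of $(\square+\bid)P_t\beta$. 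Both routes are mathematically the same; yours is a bit more modular by reusing a corollary already in hand, and you are right to flag that the pseudolocal estimate is local in nature so the cutoff must be chosen per point to obtain global smoothness on $\bar M$.
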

\begin{proof} We will proceed by induction and use the Sobolev lemma,
(\ref{sobolem}) above. Fix $t>0$. For any $\alpha\in L^2$, since $\im
P_t\subset\dom\square$, and $(\square + 1)^{-1}: L^2 \to \dom\square$ is onto,
we may apply Thm.\ \ref{globalize} to the form $\alpha = (\square + 1)P_t\beta$,
$\beta\in\dom\square$, to obtain
\[ \| \zeta P_t\beta\|_{H^1} \lesssim \|\zeta' (\square + 1)P_t\beta\|_{L^2} +
\|(\square + 1)P_t\beta\|_{L^2}\lesssim \|\beta\|_{L^2},\]
\noindent
and conclude that $\im P_t\in H^1_{\rm loc}$. Furthermore, since $P_t$ is a
function of $\square$, they commute and we also have
\[ (\square+\bid)P_t\beta = P_t(\square+\bid)\beta \in H^1_{\rm loc} \qquad
(\alpha\in L^2).\]
\noindent
Assuming now that $(\square+\bid)P_t\beta\in H^{s-1}_{\rm loc}$, the same
theorem provides
\[\| \zeta P_t\beta\|_{H^s} \lesssim  \|\zeta' (\square + 1)P_t\beta\|_{H^{s-1}}
+ \|(\square + 1)P_t\beta\|_{L^2},\]
\noindent
so $P_t\beta\in H^s_{\rm loc}$. \end{proof}
We will need the following {\it a priori} estimate for $\square$, proven in our setting by a variation on the methods in \cite{KN, FK}, in Thm.\ 4.5 of \cite{P1}.
\begin{lemma}\label{soboreg}{\rm\bf (Kohn inequality)} If $M$ is a strongly pseudoconvex $G$-manifold on which $G$ acts freely by holomorphic transformations with compact quotient $\bar M/G$ and $q>0$, then for every integer $s\ge 0$ there exists
a positive constant $C_s$ so that
\[\| u\|_{H^{s+1}}\le C_s( \|\square u\|_{H^s}+ \|u\|_{L^2}), \quad
(u\in\dom\square \cap C^\infty(\bar M,\Lambda^{p,q}))\]
\noindent
uniformly.\end{lemma}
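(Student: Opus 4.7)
The plan is threefold: (i) derive a local a priori estimate from the pseudolocal inequality of Theorem~\ref{globalize} by substituting $\alpha=(\square+\bid)u$; (ii) iterate this estimate along a chain of nested cutoffs to absorb the intermediate Sobolev norm of $u$ itself; and (iii) globalize using the $G$-invariance and cocompactness of the quotient $\bar M/G$.

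For step (i), fix $u\in\dom\square\cap C^\infty(\bar M,\lpq)$ and cutoffs $\zeta,\zeta'\in C^\infty_c(\bar M)$ with $\zeta'\equiv 1$ on $\supp\zeta$. Setting $\alpha=(\square+\bid)u$ in \eqref{prima}, and noting that $(\square+\bid)^{-1}\alpha=u$, gives the local a priori estimate
\[
\|\zeta u\|_{H^{s+1}}\lesssim \|\zeta'(\square+\bid)u\|_{H^s}+\|(\square+\bid)u\|_{L^2}\lesssim \|\square u\|_{H^s}+\|\zeta' u\|_{H^s}+\|u\|_{L^2}.
\]
For step (ii), choose a finite chain of cutoffs $\zeta=\zeta_0,\zeta_1,\dots,\zeta_{s+1}$ in $C^\infty_c(\bar M)$ with $\zeta_{j+1}\equiv 1$ on $\supp\zeta_j$, all supported in a common compact neighborhood. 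Applying the local inequality with $(\zeta,\zeta',s)$ replaced by $(\zeta_j,\zeta_{j+1},s-j)$ and iterating for $j=0,1,\dots,s$, then using $\|\square u\|_{H^{s-j}}\le \|\square u\|_{H^s}$ at each step and $\|\zeta_{s+1} u\|_{L^2}\le \|u\|_{L^2}$ at the end, yields
\[
\|\zeta u\|_{H^{s+1}}\lesssim \|\square u\|_{H^s}+\|u\|_{L^2}.
\]

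For step (iii), cocompactness of the $G$-action lets us pick a relatively compact open $V\subset\bar M$ with $G\cdot V=\bar M$, together with $\eta\in C^\infty_c(\bar M)$ identically $1$ on $V$. The local estimate with $\zeta=\eta$ bounds $\|\eta u\|_{H^{s+1}}$ by $\|\square u\|_{H^s}+\|u\|_{L^2}$. Translating by $g\in G$ and invoking the $G$-invariance of $\square$, of the Sobolev norms, and of the cutoff data produces the same bound on every translate $g\cdot V$. Summing a locally finite, $G$-invariant partition of unity on $\bar M$ (assembled from translates of a fixed smooth function subordinate to $\eta$) delivers $\|u\|_{H^{s+1}}\lesssim \|\square u\|_{H^s}+\|u\|_{L^2}$ with a uniform constant.

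The main obstacle is the bookkeeping in step (ii): the chain of cutoffs must be organized so that at each stage the residual lower-order term really does fold into $\|\square u\|_{H^s}+\|u\|_{L^2}$, without the constants deteriorating. Since the chain has fixed length $s+1$ and lives inside a fixed compact set whose geometry is controlled uniformly by the $G$-action, the pseudolocal constants of Theorem~\ref{globalize} remain uniform, and the globalization in step (iii) preserves this uniformity.
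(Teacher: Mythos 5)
First, note that the paper does not actually prove this lemma: it is quoted from Thm.\ 4.5 of \cite{P1}, where it is established by the Kohn--Nirenberg elliptic-regularization method (``a variation on the methods in \cite{KN,FK}''). Your steps (i) and (ii) are fine as far as they go: substituting $\alpha=(\square+\bid)u$ into \eqref{prima} is legitimate for $u\in\dom\square\cap C^\infty(\bar M,\lpq)$ (then $\square u$ agrees with the formal Laplacian and is locally $H^s$), and the descending chain of cutoffs correctly absorbs the intermediate term, yielding $\|\zeta u\|_{H^{s+1}}\lesssim\|\square u\|_{H^s}+\|u\|_{L^2}$ for each fixed cutoff. (One could also remark that this inverts the usual logical order in the literature, where the pseudolocal estimate is itself derived from the a priori estimates; that is not circular within this paper, but it means your route ultimately rests on the same machinery.)

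The genuine gap is in step (iii). The local estimate you produce has a \emph{global} right-hand side: both $\|\square u\|_{H^s}$ and $\|u\|_{L^2}$ are norms over all of $M$, inherited from the uncut term $\|\alpha\|_{L^2(M,\Lambda^{p,q})}$ in \eqref{prima}. The $G$-invariant Sobolev norm is an $\ell^2$-sum of local norms over a uniformly locally finite cover, so to conclude you must estimate
\[
\|u\|_{H^{s+1}}^2\;\lesssim\;\sum_{i}\|\phi_i u\|_{H^{s+1}}^2 ,
\]
and inserting your local bound gives $\sum_i\bigl(\|\square u\|_{H^s}+\|u\|_{L^2}\bigr)^2$, which is a sum of identical positive constants over the (countably infinite, since $M$ is noncompact) index set and therefore diverges. ``Summing a $G$-invariant partition of unity'' only works if every term on the right of the local estimate is itself localized near $\supp\phi_i$, e.g.\ $\|\phi_i u\|_{H^{s+1}}\lesssim\|\phi_i'\square u\|_{H^s}+\|\phi_i' u\|_{L^2}$, so that uniform local finiteness gives $\sum_i\|\phi_i' v\|^2\lesssim\|v\|^2$. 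That fully localized a priori estimate is exactly what the Kohn--Nirenberg commutator argument of \cite{KN,FK} (and \cite{P1}, Thm.\ 4.5) provides, and it does not follow from Theorem \ref{globalize} as stated. To repair your proof you would need either such a localized version of the pseudolocal estimate, or some substitute (e.g.\ off-diagonal decay of the resolvent kernel) making the infinite sum converge; as written, step (iii) does not close.
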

\begin{corollary}\label{bomb} For $t>0$ and $q>0$ we have $\im P_t\subset H^\infty(M, \Lambda^{p,q})$.
\end{corollary}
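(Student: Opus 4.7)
The plan is to combine the a priori regularity from Proposition \ref{likespecs} with the global Kohn inequality of Lemma \ref{soboreg}, iterated over the Sobolev scale. By $H^\infty(M,\Lambda^{p,q})$ we understand $\bigcap_{s\ge 0} H^s(M,\Lambda^{p,q})$, so the task is to show that $P_t\alpha\in H^s(M,\Lambda^{p,q})$ for every $s\ge 0$ and every $\alpha\in L^2(M,\Lambda^{p,q})$.

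First I would record two preliminary facts about $u:=P_t\alpha$ that together place it in the class where Kohn's inequality makes sense. On the one hand, Proposition \ref{likespecs} already gives $u\in C^\infty(\bar M,\Lambda^{p,q})$. On the other, the functional calculus shows that $P_t$ maps $L^2$ into $\dom(\square^k)$ for every $k\ge 0$, with the quantitative bound $\|\square^k P_t\alpha\|_{L^2}\lesssim_{t,k}\|\alpha\|_{L^2}$ coming from $\sup_{\lambda\ge 0}\lambda^k e^{-t\lambda}<\infty$. In particular, $u\in\dom\square\cap C^\infty(\bar M,\Lambda^{p,q})$, so Lemma \ref{soboreg} applies to $u$.

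Next I would argue by induction on $s$ that $P_t\colon L^2\to H^s$ is bounded for every $t>0$. The base case $s=0$ is contractivity of the semigroup. For the inductive step, Kohn's inequality gives
\[
\|P_t\alpha\|_{H^{s+1}}\le C_s\bigl(\|\square P_t\alpha\|_{H^s}+\|P_t\alpha\|_{L^2}\bigr).
\]
Using the semigroup law and the fact that $\square$ commutes with its own functional calculus, $\square P_t\alpha=P_{t/2}(\square P_{t/2}\alpha)$; the inner piece $\gamma:=\square P_{t/2}\alpha$ lies in $L^2$ with $\|\gamma\|_{L^2}\lesssim t^{-1}\|\alpha\|_{L^2}$ by the functional calculus bound above. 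The inductive hypothesis applied at time $t/2$ to $\gamma$ yields $\|P_{t/2}\gamma\|_{H^s}\lesssim_{s,t}\|\gamma\|_{L^2}$, so $\|P_t\alpha\|_{H^{s+1}}\lesssim_{s,t}\|\alpha\|_{L^2}$, which closes the induction and gives the corollary.

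The main obstacle is really conceptual rather than computational: unlike the pseudolocal estimate used in the proof of Proposition \ref{likespecs}, which only produced $H^s_{\mathrm{loc}}$ regularity, Lemma \ref{soboreg} is a genuinely global estimate, and passing from $H^s_{\mathrm{loc}}$ to $H^s$ requires verifying both smoothness up to $bM$ and membership in $\dom\square$ for $P_t\alpha$. Once these ingredients are assembled, the splitting $P_t=P_{t/2}\circ P_{t/2}$ converts each additional derivative of differentiability into a harmless factor of $t^{-1}$, and the induction runs without further subtlety.
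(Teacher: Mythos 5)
Your proof is correct and follows essentially the same route as the paper: both combine the boundary smoothness from Proposition \ref{likespecs} with the Kohn inequality of Lemma \ref{soboreg} and the fact that $\im P_t\subset\dom\square^k$ for all $k$, then iterate up the Sobolev scale. Your organization as an induction via the splitting $P_t=P_{t/2}P_{t/2}$ is just a repackaging of the paper's telescoped estimates $\|\square^{k-s}u\|_{H^{s+1}}\lesssim\|\square^{k-s+1}u\|_{H^s}+\|\square^{k-s}u\|_{L^2}$, and as a bonus it already yields the quantitative operator-norm bound that the paper only records in the remark following the corollary.
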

\begin{proof} Combining the results of Prop.\ \ref{likespecs} and Lemma
\ref{soboreg}, we have
\[\| u\|_{H^{s+1}}\le C_s( \|\square u\|_{H^s}+ \|u\|_{L^2}) \qquad (u\in\im
P_t)\]
\noindent
but $\im P_t \subset\dom\square^k$ for all powers of the Laplacian, so this
estimate can be iterated. Thus the estimates
\begin{equation}\label{red}\|\square^{k-s}u\|_{H^{s+1}}\lesssim
\|\square^{k-s+1} u\|_{H^s} + \|\square^{k-s}u\|_{L^2}, \quad
(s=1,2,\dots,k)\end{equation}
\noindent
hold for $u\in\im P_t$. These imply the result.\end{proof}
\begin{prop}\label{applied} If $M$ is as above, $t>0$, and $q>0$, then the heat operator $P_t$ is
bounded from $H^{-s}(\bar M,\Lambda^{p,q})\to H^{s}(M,\Lambda^{p,q})$ for any
positive integer $s$.\end{prop}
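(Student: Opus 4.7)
The plan is to combine three ingredients: the semigroup property $P_t=P_{t/2}\circ P_{t/2}$, the self-adjointness of $P_{t/2}$ (so that we can dualize), and an iterated use of the Kohn inequality of Lemma \ref{soboreg} to pass from $L^2$ to $H^s$.

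\textbf{Step 1: $L^2 \to H^s$ boundedness.} First I would establish that for every integer $s\ge 0$ and every $t>0$, $P_t$ maps $L^2(M,\Lambda^{p,q})$ boundedly into $H^s(M,\Lambda^{p,q})$. By Corollary \ref{bomb}, $\im P_t\subset H^\infty$, and the iteration \eqref{red} of the Kohn inequality yields
\[
\| P_t\alpha \|_{H^s} \;\lesssim\; \sum_{j=0}^{s} \|\square^{j} P_t\alpha\|_{L^2}, \qquad (\alpha\in L^2(M,\Lambda^{p,q})).
\]
Since $P_t=e^{-t\square}$ commutes with $\square$ via functional calculus, each term is controlled by
\[
\|\square^{j} e^{-t\square}\alpha\|_{L^2} \;\le\; \Bigl(\sup_{\lambda\ge 0}\lambda^{j}e^{-t\lambda}\Bigr)\|\alpha\|_{L^2} \;\lesssim\; t^{-j}\|\alpha\|_{L^2},
\]
which is finite for $t>0$. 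This proves $P_t:L^2\to H^s$ is bounded.

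\textbf{Step 2: Dualization.} By the duality $(H^s)^{\ast}\cong H^{-s}$ with respect to the $L^2$ pairing (as set up for $G$-invariant Sobolev spaces in \cite{GKS}), taking adjoints in Step 1 yields
\[
(P_{t/2})^{\ast}\colon H^{-s}(M,\Lambda^{p,q})\longrightarrow L^2(M,\Lambda^{p,q}) \text{ bounded.}
\]
Because $\square$ is self-adjoint, so is $P_{t/2}=e^{-(t/2)\square}$, and therefore $(P_{t/2})^\ast=P_{t/2}$. Thus $P_{t/2}\colon H^{-s}\to L^2$ is itself bounded.

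\textbf{Step 3: Composition.} Using the semigroup identity $P_t=P_{t/2}\circ P_{t/2}$ and combining Steps 1 and 2 with $t$ replaced by $t/2$ throughout, we obtain the factorization
\[
H^{-s}(M,\Lambda^{p,q}) \xrightarrow{P_{t/2}} L^2(M,\Lambda^{p,q}) \xrightarrow{P_{t/2}} H^{s}(M,\Lambda^{p,q}),
\]
each arrow being bounded, which establishes the claim.

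The main obstacle is not the operator-theoretic bookkeeping (Step 1 is just iteration plus functional calculus, Step 3 is trivial once Steps 1--2 are in place) but rather the careful justification of the duality used in Step 2 in the present noncompact, $G$-invariant setting, in particular that the $L^2$ pairing identifies $(H^s)^{\ast}$ with $H^{-s}$ and that taking adjoints preserves boundedness between these paired spaces. This is handled by the framework of \cite{GKS} and the fact that the $G$-invariant partitions of unity used to define the Sobolev norms make the pairing and the Sobolev topology compatible in the standard way.
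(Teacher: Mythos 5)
Your proposal is correct and follows essentially the same route as the paper: establish $P_{t/2}\colon L^2\to H^s$ boundedness via Corollary \ref{bomb} and the iterated Kohn estimates \eqref{red}, dualize using self-adjointness to get $P_{t/2}\colon H^{-s}\to L^2$, and compose via the semigroup law. The only cosmetic difference is that the paper grounds the identification of $H^{-s}(\bar M)$ as the dual of $H^s(M)$ in Remark 12.5 of \cite{LM} (distributions on $\widetilde{M}$ supported in $\bar M$) rather than \cite{GKS}, and your Step 1 makes explicit the quantitative $t^{-j}$ bounds that the paper defers to the remark following the proposition.
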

\begin{proof} First recall the following fact about Sobolev spaces on manifolds
with boundary from Remark 12.5 of \cite{LM}. For $s>0$, the dual space of
$H^s(M)$, denoted $H^{-s}(\bar M)$, consists of elements of
$H^{-s}(\widetilde{M})$ whose support is in $\bar M$. Now, from Cor.\ \ref{bomb}
we have that for all $s>0$, $P_t: L^2 \to H^s(M)$ continuously. Since $P_t$ is
self-adjoint, its domain can be extended to the dual of $H^s(M)$ so that
$P_t:H^{-s}(\bar M)\to L^2(M)$. The semigroup law $P_t^2=P_{2t}$ holds on
$C^\infty_c(M)\subset L^2(M)$, a dense subspace of all the $H^s(M)$,
$(s\in\mathbb R)$ so we may conclude that $P_{t}:H^{-s}(\bar M)\to H^s(M)$ for
all $s>0$.\end{proof}
\begin{rem} These results have three easy consequences.

\medskip

\noindent
1) For an operator norm estimate, we can put $u=P_t \alpha$ in the estimates \eqref{red} and telescope them to find that for $s\in\mathbb N$,
\[\| P_t \alpha \|_{H^s} \lesssim \sum_{k=0}^s \| \square^k P_t \alpha\|_{L^2}
\lesssim \sum_{k=0}^s t^{-k}  \|\alpha\|_{L^2},\]
\noindent
which yields an estimate analogous to that in Prop.\ \ref{ultra}.

\medskip

\noindent
2) Combining Cor.\ \ref{bomb} with Gagliardo-Nirenberg-Sobolev embeddings, {\it e.g.}
\[H^s(\mathbb R^n)\subset L^p(\mathbb R^n), \qquad p=\frac{2n}{n-2s}, \ \ 0\le s
< \frac{n}{2},\]
\noindent
\cite{BL}, one obtains results overlapping those of the previous sections in
$L^p$ spaces. With other such embeddings can obtain results for
$L^p$-Sobolev spaces.

\medskip

\noindent
3) One can continue the treatment in Sect.\ 6 of
\cite{P1} to obtain that, for $t>0$, the heat operator's Schwartz kernel $K_t\in
C^\infty(\bar M\times\bar M)$ and
\begin{equation}\label{GHilbSchm}\int_{\frac{M\times M}{G}} |K_t|^2 < \infty,
\qquad (t>0),\end{equation}
\noindent
noting that $\square$ and thus $K_t$ are $G$-invariant. When $G$ is unimodular,
\eqref{GHilbSchm} means that von Neumann's $G$-trace of $P_{2t}$ is finite.
\end{rem}

\begin{ack} It is a pleasure to thank Norbert Peyerimhoff for helpful
discussions. JJP would like to thank the Math Department at
TU-Chemnitz for two visits.\end{ack}

\end{document}